\newtheorem{theorem}{Theorem}[section]
\newtheorem{lemma}[theorem]{Lemma}
\theoremstyle{definition}
\newtheorem{question}[theorem]{Question}
\newtheorem{corollary}[theorem]{Corollary}
\newtheorem{fact}[theorem]{Fact}
\newtheorem{proposition}[theorem]{Proposition}
\newtheorem{claim}[theorem]{Claim}
\theoremstyle{remark}
\numberwithin{equation}{section}
\def\Z{{\mathbb Z}}
\def\Zp{{\Z}_p}
\newcommand{\R}{{\mathbb R}}
\newcommand{\N}{{\mathbb N}}
\newcommand{\C}{{\mathbb C}}
\newcommand{\K}{{\mathbb K}}
\newcommand{\HH}{{\mathbb H}}
\newcommand{\Q}{{\mathbb Q}}
\newcommand{\Prm}{{\mathbb P}}
\newcommand{\T}{{\mathbb T}}
\newcommand{\sq}{\subseteq}
\newcommand{\cc}{countably\ compact}
\newcommand{\mi}{minimal}
\newcommand{\tm}{totally minimal}
\newcommand{\co}{connected}
\newcommand{\sco}{sequentially complete} 
\def\td{totally\ disconnected}
\begin{document}

 \title[Countably compact groups with minimal powers]{Countably compact groups having minimal infinite powers}


\author{Dikran Dikranjan}
\address{Dipartimento di Scienze Matematiche, Informatiche e Fisiche\\
Universit\`{a} di Udine}
\curraddr{}
\email{dikranja@dimi.uniud.it}

\author{Vladimir Uspenskij}
\address{Department of Mathematics, Ohio University, Athens, OH 45701, USA}
\curraddr{}
\email{uspenski@ohio.edu}
\thanks{}

\thanks{}

\subjclass[2020]{Primary  22A05.\\
Secondary 54H11, 22B05, 54A35, 54B30, 54D25, 54D30, 54H13. \\
Key words and phrases: {\it minimal group,  precompact group,  \sco \ group, \cc \ group, $\omega$-bounded group, compact group, connected group, totally disconnected group, measurable cardinal}}

\date{}

\dedicatory{}

\commby{Vera Fischer}

\begin{abstract}
We answer the question, raised more than thirty years ago in \cite{DS2,DS4}, on whether the power $G^\omega$ of a \cc\ minimal  Abelian group  $G$ is minimal, by showing that the negative answer is equivalent to the existence of measurable cardinals. 
The proof is carried out in the larger class of \sco \ groups. We characterize the \sco \ minimal  Abelian groups $G$ such that $G^\omega$ is minimal -- these are
exactly those $G$ that contain the connected component of their completion. 
This naturally leads to the next step, namely, a better understanding the structure of the \sco \ minimal Abelian groups,
and in particular, their connected components which turns out to depend of the existence of   Ulam measurable cardinals. More specifically, 
all connected \sco \ minimal Abelian groups are compact,  if Ulam measurable cardinals do not exist. On the other hand, for every  Ulam measurable
cardinal $\sigma$ we build  a non-compact torsion-free connected
\mi\ $\omega$-bounded Abelian group of weight $\sigma$, thereby showing that the Ulam measurable cardinals are precisely the weights
of non-compact \sco\ connected \mi\ Abelian groups.
\end{abstract}

\maketitle

\section{Introduction}

We denote by ${\N}$ set of naturals $\{0,1,\dots\}$ (and $\N^*=\N\setminus\{0\}=\{1,2,\dots\}$), 
by ${\Prm}$ the set all primes,  by ${\Z}$ the integers, by ${\Q}$  the rationals, by  ${\R}$ the reals,  by ${\T}$  the unit circle group in ${\C}$,  by $\Z_p$ the $p$-adic integers ($p\in \Prm$), by $\Z(n)$ the cyclic group of order $n$ ($n\in \N^*$).  The cardinality of continuum $2^\omega$ will be denoted also by ${\mathfrak c}$.

All groups considered in this paper (except in \S 3) will be Abelian, so additive notation will be always used. 
We denote by $\widetilde G$ the completion of a topological group $G$ (we do not have to worry about left, right, and two-sided uniformities, since our groups are Abelian),
while $c(G)$ denotes the \co \ component of the neutral element of a group $G$ (for brevity, we call $c(G)$ the connected component of $G$). 

\subsection{Minimal groups and their products}

Following Stephenson \cite{St}, call a Hausdorff topological group $(G,\tau)$ {\it minimal} if the topology $\tau$ is a  minimal element of the  partially ordered, with respect to inclusion, set of Hausdorff group topologies on $G$.  Compact groups are obviously \mi, the  first examples of non-compact \mi \ groups were given in \cite{St,Do1}, while  the  first examples of non-compact countably compact \mi \ groups  were given in \cite{CG} (see also \cite{DS2}--\cite{DT1}).

The problem of preservation of minimality under products (the counterpart of Tychonoff product theorem for compactness) was raised by
G. Choquet at the ICM in Nice in 1970. The first counter-example was given in \cite{Do1}. Since then this problem became a central one in the 
framework of minimal groups \cite{Do1,D5,DPS,DS1,DS4,DTo,EDS,S2,S4}.  A general criterion for minimality of arbitrary products of minimal Abelian groups
was produced in \cite{D5,DS1} (see also \cite[Theorem 2.4]{DS1} and \cite[Theorem 6.2.3]{DPS}). The minimality of arbitrary powers of  an  Abelian group $G$ was characterized earlier by Stoyanov \cite{S4}, who proved that it is equivalent to the minimality of $G^{\mathfrak c}$ (see Theorem \ref{Stoy} for a more detailed description). Since minimality is preserved by taking direct summands, the minimality of some power $G^\kappa$ implies the minimality of all smaller powers. 

It is natural to expect that adding some additional compact-like properties may improve productivity of minimal groups.
Indeed,  all finite powers of a \cc \  minimal group are minimal \cite{DS4}. On the other hand, there exist pseudocompact minimal Abelian groups $G$ with  $G^{\omega_1}$ non-\mi, but $G^\omega$ minimal (for more detail see \cite{D8,DS4}). 

The choice of compact-like property  to match with minimality in this paper 
is sequential completeness, since it simultaneously generalizes two fundamental properties like   
countable compactness and completeness. A topological group $G$ is {\em \sco} if $G$ is sequentially closed in 
$\widetilde G$ (or, equivalently, when every Cauchy sequence in $G$ converges) \cite{DT1,DT3}. 
We prove that all finite powers of a
\sco\  minimal group are minimal (this is a corollary of a more general fact, see Lemma \ref{cc_perf_min}).  This suggests  the following question, set in \cite[Question 9]{DS2} and \cite[Question 1.10]{DS4} in the case of \cc\ groups which still remains open since then:

\begin{question}\label{Ques_Nico}  Let $G$ be a \sco\  minimal Abelian group. Is $G^\omega$ minimal ? \end{question}

We shall see below that this question, as well as the question in  \cite{DS4} regarding \cc\ groups, cannot be  answered in ZFC.

In order to answer Question \ref{Ques_Nico}, we characterize first the \sco \  Abelian groups $G$ such that $G^\omega$ is minimal (i.e., those providing a positive answer to Question \ref{Ques_Nico}): 

\bigskip

\noindent {\bf Theorem A.}  {\em For a  \sco \ minimal  Abelian group $G$ the following are equivalent:
\begin{itemize}
         \item[(a)]  all powers of $G$ are minimal;
         \item[(b)] $G^\omega$ is minimal.  
         \item[(c)] $G$ contains $c(\widetilde{G})$;  hence, $c(G)=c(\widetilde{G})$ is compact. 
\end{itemize} }

\medskip

The proof of this theorem is given in  \S \ref{proofs}, where we prove it first in the totally disconnected case, when $G^\omega$ is always minimal (Corollary \ref{beta(td)}),
i.e., Question \ref{Ques_Nico} has positive answer for totally disconnected $G$. The equivalence of (b) and a weaker form of (c) was established in \cite[Main Theorem]{DTo} in the much smaller class of $\omega$-bounded \mi \ Abelian groups. 

\smallskip
On the other hand, we shall see in Corollary \ref{td_omega} that the assertion ``all powers of a  \cc\  minimal  Abelian groups are \mi" is equivalent to the non-existence of
Ulam-measurable cardinals. Let us recall that a cardinal $|X|$ is {\it Ulam-measurable} 
(resp., {\it measurable}) if there exists an $\omega_1$-complete (resp., $|X|$-complete) free ultrafilter on $X$ (a filter on a set $X$ is $\kappa$-complete if it is closed under intersections of families of cardinality $<\kappa$).  A cardinal is Ulam-measurable if and only if it greater than or equal to the first measurable cardinal. 
The assumption that there exist no Ulam-measurable cardinals is  known to be consistent with ZFC, while 
their existence implies consistency of ZFC and hence no proof of that existence is to be expected  \cite{J}. 

Uncountable measurable cardinals are Ulam-measurable, while the
least Ulam-measurable cardinal is also measurable. 

For the connected case Theorem A gives the following striking consequence characterizing the minimality of $ G^\omega$ by compactness of $G$:

\begin{corollary}\label{cc_kappa1} A connected  \sco\  \mi \ Abelian group  $G$ is compact if and only if $G^\omega$ is minimal. \end{corollary}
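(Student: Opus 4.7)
The plan is to derive Corollary \ref{cc_kappa1} as a direct consequence of Theorem A, with the forward implication being immediate and the reverse implication amounting to reading off what condition (c) of Theorem A says in the connected case.

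For the ``only if'' direction, I would simply note that if $G$ is compact, then $G^\omega$ is compact by Tychonoff's theorem, and every compact Hausdorff group is minimal; so $G^\omega$ is minimal. This step uses none of the hypotheses beyond compactness.

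For the ``if'' direction, the plan is to apply Theorem A with its equivalence (b) $\Leftrightarrow$ (c). Assuming $G^\omega$ is minimal, Theorem A yields that $G$ contains $c(\widetilde{G})$ and that $c(G) = c(\widetilde{G})$ is compact. Now the connectedness hypothesis on $G$ enters: since $G$ is connected we have $c(G) = G$, hence $G = c(\widetilde{G})$, which is compact.

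There is essentially no obstacle once Theorem A is in hand -- the whole content of the corollary is repackaging condition (c) under the extra assumption that $G$ itself is connected, which collapses the distinction between $G$ and its connected component. The one small point worth stating cleanly in the write-up is that the sequential completeness and minimality hypotheses are exactly the standing hypotheses of Theorem A, so the theorem applies without further verification, and the argument is symmetric to the trivial compact-implies-minimal-power direction.
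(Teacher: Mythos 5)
Your proof is correct and follows exactly the route the paper intends: the corollary is presented as an immediate consequence of Theorem A, with the reverse implication obtained from (b) $\Rightarrow$ (c) plus $G = c(G)$ by connectedness, and the forward implication from Tychonoff's theorem together with minimality of compact groups. No gaps.
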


This corollary shows that in order to positively answer Question \ref{Ques_Nico} in the connected case, we have to
check whether the connected  \sco\  \mi \ Abelian group  $G$ are compact. This issue turns out to be non--trivial, we shall face it in \S \ref{Structure}
 which mainly deals with this aspect of the structure of  \sco \ minimal Abelian groups (see Question \ref{Ques_U}). 

Here is another application of Theorem A. The general criterion for  minimality of arbitrary products of  Abelian groups from \cite[Theorem 6.2.3]{DPS},
simplifies in the case of \sco \ group (due to Lemma \ref{cc_perf_min}), so that applying the criterion from \cite[Corollary 6.2.8]{DPS} one gets the following
corollary (for a proof see \S 2.2).

\begin{corollary}\label{Crit_prod} Let $\{G_i\}_{i\in I}$ be a family of \sco \ Abelian groups.  Then $G=\prod_{i\in I}G_i$  is minimal if and only if  every countable subproduct is minimal. In such a case there exists a co-finite subset $J\subseteq I$ such that all groups $G_i^\omega$, $i\in J$, are \mi. \end{corollary}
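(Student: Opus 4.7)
The plan is to deduce this corollary from the general minimality-of-products criterion in \cite[Theorem 6.2.3]{DPS} and its sharpening \cite[Corollary 6.2.8]{DPS}, using Lemma \ref{cc_perf_min} to discard the hypotheses concerning finite subproducts. The forward implication is immediate: minimality is preserved by direct summands, and every subproduct $\prod_{i\in C}G_i$ is a direct summand of $G$, so if $G$ is \mi\ then so is every countable subproduct.

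For the converse direction, I would unpack the criterion from \cite[Theorem 6.2.3]{DPS}, which characterizes minimality of $\prod_i G_i$ via minimality of each $G_i$ together with an essentialness condition on subproducts. Under the \sco\ hypothesis, Lemma \ref{cc_perf_min} renders the clause about finite subproducts automatic; what remains, as packaged in \cite[Corollary 6.2.8]{DPS}, is precisely the requirement that every countable subproduct be \mi. Feeding in the standing hypothesis yields minimality of $G$.

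For the cofiniteness assertion, assume $G$ is \mi\ and suppose for contradiction that $G_i^\omega$ fails to be \mi\ for infinitely many indices $i_1, i_2, \ldots \in I$. The subproduct $H := \prod_n G_{i_n}$ is a closed direct summand of $G$, hence \mi, and is \sco\ (sequential completeness passes to countable products). Since $\widetilde H = \prod_n \widetilde{G_{i_n}}$ and the connected component of a product is the product of the connected components, Theorem A applied to $H$ delivers $c(\widetilde{G_{i_n}}) \subseteq G_{i_n}$ for every $n$; a second application of Theorem A to each $G_{i_n}$ then shows that $G_{i_n}^\omega$ is \mi, contradicting the choice of the $i_n$. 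The main obstacle I anticipate is the faithful translation of the abstract DPS criterion into the clean \emph{countable subproduct} formulation stated here; once this reduction through Lemma \ref{cc_perf_min} is carried out, Theorem A supplies the cofiniteness part without further complication.
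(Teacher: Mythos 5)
The equivalence part of your argument follows the paper's route: the paper likewise derives it from the product criterion in DPS (citing Theorem~6.2.7 there) together with Lemma \ref{cc_perf_min}, which disposes of the clauses about finite subproducts. The genuine gap is in your proof of the cofiniteness assertion. You apply Theorem A to the countable subproduct $H=\prod_n G_{i_n}$ to ``deliver'' $c(\widetilde{G_{i_n}})\subseteq G_{i_n}$, but the only implications of Theorem A that produce containment of the connected component are (a) or (b) $\Rightarrow$ (c), and to invoke them you must already know that $H^\omega$ is minimal. All you actually have is that $H$ itself is minimal, being a direct summand of the minimal group $G$; $H^\omega$ is \emph{not} a subproduct of $G$, so its minimality is not given by the hypothesis. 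And for a sequentially complete minimal group, minimality does not imply minimality of the countable power --- that is precisely the dichotomy the paper establishes (it can fail exactly when Ulam-measurable cardinals exist; see Theorem B and Corollary \ref{td_omega}). So from the data you have, Theorem A yields nothing about $H$, and the intended contradiction never materializes.

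The paper closes this step by a different mechanism: the same DPS theorem that gives the equivalence also asserts that minimality of $\prod_{i\in I}G_i$ forces all but finitely many factors to be strongly $p$-dense for every prime $p$, and Stoyanov's Theorem \ref{Stoy} ((c) $\Rightarrow$ (a)) then upgrades strong $p$-density to minimality of all powers of those factors. If you wish to avoid quoting that part of the DPS criterion, you need some independent source of information about infinite powers of the individual factors; minimality of the countable subproducts of $G$ alone does not obviously provide it.
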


Corollary \ref{cc_kappa1}  implies that if the groups $G_i$ in the above theorem are also \co, then $G$ is minimal if and only if  all but finitely many of the groups $G_i$ are compact. 

According to \cite[Theorem 1.5]{DS4} the last part of Corollary \ref{Crit_prod} cannot be inverted: there exists a family $\{G_n:n\in \N\}$ of $\omega$-bounded Abelian groups such that all powers $G_n^\lambda$ are minimal, but $\prod_{n=0}^\infty G_n$ is not minimal. 


\subsection{The structure of  \sco \ minimal groups}\label{Structure}

Ulam-measur\-able cardinals already appeared in the context of comparison between compactness and countable compactness \cite{A2}, \cite{CRe}, \cite{U2}.
Relaxing countable compactness to sequential completeness and adding minimality to the mix we consider the following

\begin{question}\label{Ques_U} Is every connected  \sco\ \mi \ Abelian group $G$ compact?  
\end{question}

Connectedness cannot be omitted here, since there exist \mi\ $\omega$-bounded proper dense subgroups of $\Z_p^{\omega_1}$  \cite{DSpams}
(they are obviously totally disconnected).

According to Corollary \ref{cc_kappa1} a positive answer to Question \ref{Ques_Nico} is equivaent to a positive answer to Question \ref{Ques_U}.

It turns out that the answer to Question \ref{Ques_U} depends on the existence of measurable cardinals, hence  Question \ref{Ques_U} and Question \ref{Ques_Nico} cannot be answered in ZFC. 

\medskip
\medskip

\noindent {\bf Theorem B.}
{\em Let $\alpha\geq \omega$ be a cardinal. Then the following conditions are equivalent: 
\begin{itemize}
        \item[(a)] $\alpha$ is Ulam-measurable;
        \item[(b)]  there exists a non-compact \co, sequentially complete,  minimal  Abelian group $G$ of weight $\alpha$;
        \item[(c)] there exists a non-compact \co,  $\omega$-bounded, minimal torsion-free  Abelian group $G$ of weight $\alpha$.
\end{itemize} 

Moreover, if $\alpha$  is the least  Ulam-measurable cardinal, then there exists a \co\ non-compact minimal torsion-free  Abelian group $G$ of weight $\alpha$
that is $\beta$-bounded for every $\beta<\alpha$.}

\medskip
\medskip

The proof of this  theorem is given in \S \ref{proofs}. Since the  least  Ulam-measurable cardinal is a measurable cardinal, we prove there a sightly more general version of the last assertion of the theorem. Namely, for every measurable cardinal $\alpha$ there exists a \co\ non-compact minimal torsion-free  Abelian group $G$ of weight $\alpha$ that is $\beta$-bounded for every $\beta<\alpha$. It should be noted here that  the last property is very close to compactness, since a space $X$ that is $w(X)$-bounded must be compact. 
 
 On the other hand, the implication (b) $\to$ (a) of the above theorem follows from the fact that a connected  \sco\ \mi\ Abelian group of non-measurable size is compact due to item (b) of the following: 
    
   \bigskip

\noindent {\bf Theorem C.}  {\em Let $G$ be a \sco\  minimal  Abelian group. 
\begin{itemize}
         \item[(a)] If $c(G)$ is compact, then $c(G)=c(\widetilde{G})$ is compact and $G/c(G)$ is \sco  \ and minimal. 
         \item[(b)]  If $w(c(G))$ is not Ulam-measurable, then $c(G)$ is compact. 
\end{itemize}
}

 \bigskip
 
In particular, a \sco\  Abelian group $G$ with non Ulam-measurable $w(c(G))$ is \mi\  if and only if $c(G)$ is compact and $G/c(G)$ is \mi \ (the sufficiency uses the three space property, see Proposition \ref{3space}, or \cite{EDS}).  This reduces the study of \cc\ \mi \  Abelian groups with small  \co \ component to that of \td\ groups.

Now we come back to Question \ref{Ques_Nico}. First we give a sufficient condition for a positive answer:

\begin{corollary}\label{DT} Let $G$ be a \sco \ \mi\ \  Abelian group such that $w(c(G))$ is not Ulam measurable. Then $G^\omega$ is \mi. 
 \end{corollary}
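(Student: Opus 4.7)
The plan is to deduce this directly by chaining Theorem A with Theorem C, both of which we may take as established. The corollary asks us to verify minimality of $G^\omega$, and Theorem A gives a clean structural criterion for exactly this in the \sco \ minimal Abelian setting: $G^\omega$ is \mi \ iff $G \supseteq c(\widetilde{G})$ (with $c(G) = c(\widetilde{G})$ compact). So the task reduces to verifying this containment under the hypothesis that $w(c(G))$ is not Ulam-measurable.

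First, I would apply Theorem C(b) to $G$: since $G$ is \sco \ and \mi \ and $w(c(G))$ is not Ulam-measurable, it follows that $c(G)$ is compact. Next, since we now have $c(G)$ compact, Theorem C(a) applies and yields $c(G) = c(\widetilde{G})$. In particular, $G$ contains $c(\widetilde{G})$, which is precisely condition (c) of Theorem A. The implication (c)$\Rightarrow$(b) of Theorem A then delivers minimality of $G^\omega$, as required.

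There is no serious obstacle in this argument, since it is essentially the composition of two of the main results of the paper; all of the hard work has already been done in establishing Theorem A (handling the reduction to the totally disconnected case and its interaction with the connected component of the completion) and Theorem C (the non-trivial fact that small weight of $c(G)$ forces its compactness, via the absence of Ulam-measurable cardinals). The only minor check is that our hypotheses on $G$ match those required by both theorems: $G$ is \sco, \mi, and Abelian throughout, so both Theorem C and Theorem A apply directly without modification.
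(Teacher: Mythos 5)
Your proposal is correct and follows exactly the route the paper itself takes: Theorem C(b) gives compactness of $c(G)$, Theorem C(a) then upgrades this to $c(G)=c(\widetilde G)$, and the implication (c)$\Rightarrow$(b) of Theorem A yields minimality of $G^\omega$. Your write-up merely makes explicit the chaining of C(b) into C(a) that the paper's one-line argument leaves implicit.
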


Indeed, the hypotheses of the corollary imply that $c(G)=c(\widetilde G)$ is compact (in view of Theorem C(b)), so $G^\omega$ is \mi, by Theorem A. This proves item (a) of the next corollary, in view of Theorem B. Actually, the next corollary shows that Question \ref{Ques_Nico} cannot be answered neither for \cc \ groups, nor for \sco\ groups. 

 \begin{corollary}\label{td_omega} (a) Under the assumption that there exist no Ulam-measurable cardinals, all powers of a \sco \ minimal  Abelian group are \mi, i.e., 
  Question \ref{Ques_Nico} has a positive answer.

(b) Under the assumption that there exist Ulam-measurable cardinals, there exists an $\omega$-bounded 
minimal Abelian group $G$ such that $G^\omega$ is not \mi,   Question \ref{Ques_Nico} has a negaitive answer.  \end{corollary}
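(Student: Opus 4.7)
The plan is to assemble Theorems~A, B, C together with Corollaries~\ref{DT} and~\ref{cc_kappa1} and deduce both items as immediate consequences; no new construction is needed.

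For (a), I would fix an arbitrary \sco\ \mi\ Abelian group $G$ and note that under the hypothesis of (a) the cardinal $w(c(G))$ is vacuously not Ulam-measurable. Corollary~\ref{DT} then delivers that $G^\omega$ is \mi, which is the positive answer to Question~\ref{Ques_Nico}. To obtain the stronger ``all powers are \mi'' formulation stated in~(a), I would chain Theorem~C(b) (giving $c(G)$ compact), Theorem~C(a) (identifying it with $c(\widetilde G)$), and Theorem~A, implication (c)$\Rightarrow$(a) (promoting minimality from $G^\omega$ to all powers $G^\kappa$).

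For (b), I would pick any Ulam-measurable cardinal $\alpha$ and apply the implication (a)$\Rightarrow$(c) of Theorem~B to obtain a non-compact, connected, $\omega$-bounded, \mi, torsion-free Abelian group $G$ of weight $\alpha$. Since every $\omega$-bounded group is \sco, $G$ is connected, non-compact, \sco, and \mi; Corollary~\ref{cc_kappa1} then forces $G^\omega$ not to be \mi, which witnesses the required negative answer to Question~\ref{Ques_Nico} in that model.

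The corollary itself contains no real difficulty: it is a bookkeeping step that converts the dichotomy of Theorem~B into a ZFC-independence statement for Question~\ref{Ques_Nico}. The genuinely hard work lies upstream---namely, the construction, for each Ulam-measurable $\alpha$, of the non-compact connected $\omega$-bounded minimal torsion-free Abelian group witnessing Theorem~B(c), and the Ulam-measurability argument required to prove Theorem~C(b) (and hence Corollary~\ref{DT}).
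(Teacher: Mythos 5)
Your proposal is correct and follows essentially the same route as the paper: item (a) is exactly the paper's deduction via Theorem C(b) and Theorem A (packaged as Corollary \ref{DT}), and item (b) is the paper's combination of Theorem B with Corollary \ref{cc_kappa1}. The only cosmetic difference is that you invoke Theorem B(c) to get the $\omega$-bounded witness directly (hence sequentially complete), whereas the paper cites Theorem B(b); your choice matches the $\omega$-boundedness claimed in the corollary's statement slightly more tightly.
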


Item (b) follows from item (b) of Theorem B, which ensures the existence of a   non-compact \co,   sequentially complete,  minimal  Abelian group $G$ of weight $\alpha$, whenever $\alpha$ is an Ulam-measurable cardinal. Then  $G^\omega$ is not \mi, according to Corollary \ref{cc_kappa1}. 

Finally, we offer a description of the \sco\  Abelian group $G$ such that $G^\omega$ is \mi, i.e., those satisfying the equivalent conditions of  Theorem A.
They turn out to be extensions of a compact group by a minimal Abelian group that is a direct product  of bounded minimal $p$-groups.   

\medskip
\medskip
 \noindent {\bf Theorem D.} {\em  Let $G$ be a \sco\  Abelian group such that $G^\omega$ is \mi. Then there exists a compact subgroup $N$ of $G$ such that $G/N\cong \prod_{p \in \Prm} B_p$, where $B_p$ is a bounded $p$-torsion minimal group for every prime $p$. }

\medskip This theorem is proved in  \S \ref{proofs}.  The subgroup $N$ is ``large", it contains the compact subgroup $c(\widetilde G)$ and much more
(for a stronger and more precise form of the theorem see Theorem \ref{TD_cor}).

\medskip 

The paper is organzied as follows. In \S \ref{BackMi} we provide background on minimality and sequential completeness. In particular, 
we recall two criteria for minimality of dense subgroups of compact Abelian groups and we give a factorization theorem for totally disconnected  \sco\ \mi \ Abelian group (Theorem \ref{Products}). Section \ref{proofs} contains the proofs of Theorems A, B, C, D and Corollary \ref{Crit_prod}. A key point of \S \ref{proofs} is Lemma \ref{coroVU}, which is the clue to the proofs of Theorems A, C and D. 

\subsection*{Notation and terminology}
Let $G$  be a group and  $A$  be a subset of $G$.   
We denote  by $\langle A\rangle$ the subgroup of $G$ generated by $A$. The group $G$ is {\it divisible}  if for every $g\in G$ and $n\in \N^*$ the equation $nx=g$ has a solution in $G$. For an  Abelian group $G$ we  set $G[n] = \{x \in  G: nx=0\}$, for $n\in \N^*$, and $Soc(G) = \bigoplus_{p\in \Prm}G[p]$.  

We recall some notions of compactness-like conditions in topological groups and spaces. A Tychonoff space $X$ is {\it pseudocompact} if every continuous real-valued function on $G$ is bounded, and {\it countably compact} if every countable open cover has a finite subcover (equivalently, every
sequence in $X$ has a cluster point).
For an infinite cardinal $\alpha$, a group $G$ is {\it $\alpha$-bounded } if every subset of cardinality  $\leq
\alpha$  of $G$  is contained in a compact subgroup of $G$ (some authors use the term $\omega$-bounded for a different property). 
 Obviously, $\omega$-boundedness implies countable compactness, and countable compactness  implies pseudocompactness.

For undefined symbols or notions see \cite{DPS,E,Fuchs}.

\section{Proofs of the main results}\label{Ulam+U} 

\subsection{Background on minimality and sequential completeness}\label{BackMi}

 A topological group  $G$ is {\it precompact} if its  completion $\widetilde G$ is compact 
(or, equivalently, if for any open $U\ne \emptyset$ in $G$ there is a finite subset $F\sq G$ such that $F+U=G$). Peudocompact groups are
precompact. The following fundamental theorem of Prodanov and Stoyanov \cite{PS2} (see also \cite[Theorem 2.7.7]{DPS} for an alternative 
proof) says that the \mi \ Abelian groups are precisely the (dense) subgroups of the compact Abelian groups: 

\begin{theorem}\label{Prec:Thm} {\rm \cite{PS2}} Every \mi\  Abelian group is precompact.  
 \end{theorem}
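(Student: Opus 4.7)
The plan is to exhibit a coarser Hausdorff group topology on $G$ arising from a realization inside a compact Abelian group, and then invoke minimality to identify this topology with the original one; compactness of $\widetilde G$ will follow from uniqueness of the Hausdorff completion.

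Concretely, I would pass to the Bohr topology. Let $\widehat G$ denote the group of continuous characters $\chi : G \to \T$, and let $\tau_b$ be the initial group topology on $G$ induced by the evaluation map $e : G \to \T^{\widehat G}$. Clearly $\tau_b \leq \tau$, where $\tau$ is the original topology. Once $\tau_b$ is shown to be Hausdorff, minimality of $(G,\tau)$ forces $\tau_b = \tau$, so $e$ is a topological embedding of $G$ into the compact Abelian group $\T^{\widehat G}$. The closure $\overline{e(G)}$ is then a compact group in which $e(G)$ is dense, and by uniqueness of the Hausdorff Abelian completion it is canonically isomorphic to $\widetilde G$, yielding that $\widetilde G$ is compact.

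The whole problem therefore reduces to separation of points of $G$ by continuous characters: for each nonzero $x \in G$, produce $\chi \in \widehat G$ with $\chi(x) \neq 0$. I expect this to be the main obstacle; it is the substantive content of the Prodanov--Stoyanov argument. A natural starting point is to examine the closed monothetic subgroup $H = \overline{\langle x \rangle}$. If $H$ is compact, Pontryagin duality furnishes a continuous character $\chi_0 : H \to \T$ with $\chi_0(x) \neq 0$, and injectivity of $\T$ as an abstract $\Z$-module extends $\chi_0$ to an abstract character $\chi : G \to \T$; the delicate step is to promote the extension to a $\tau$-continuous character, and here one must invoke minimality through a careful comparison of group topologies on $G$ (for instance, by showing that a putative strictly weaker Hausdorff group topology produced by a lack of continuity would contradict the minimality of $(G,\tau)$). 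The case in which $H$ is non-compact demands additional structural analysis, typically reducing matters to a topologically discrete cyclic situation handled via suitable quotients and divisibility of $\T$.

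Granting separation, the embedding argument above completes the proof. The crux of the statement is therefore that every minimal Abelian group is maximally almost periodic, and the real work sits in manufacturing continuous characters by combining abstract extension through the divisible group $\T$ with a continuity bootstrap driven by minimality.
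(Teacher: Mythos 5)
There is a genuine gap. Your reduction is correct and standard: if the continuous characters of $(G,\tau)$ separate points, then the Bohr topology $\tau_b$ is a Hausdorff group topology with $\tau_b\leq\tau$, minimality gives $\tau_b=\tau$, and $G$ embeds densely into a closed subgroup of the compact group $\T^{\widehat G}$, so $\widetilde G$ is compact. But this reduction merely restates the theorem as ``every minimal Abelian group is maximally almost periodic,'' and that assertion --- which you correctly identify as the crux --- is left unproved. Everything you write about it is conditional (``I expect,'' ``one must invoke,'' ``typically reducing matters to''), so no proof has actually been given.

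Moreover, the strategy you sketch for the crux does not work as stated. Extending a character $\chi_0$ from a compact monothetic subgroup $H=\overline{\langle x\rangle}$ to an abstract character $\chi:G\to\T$ via injectivity of $\T$ as a $\Z$-module produces a homomorphism with no continuity properties whatsoever, and there is no ``continuity bootstrap'' available from minimality at this point: minimality lets you compare Hausdorff \emph{group topologies} on $G$, but a single discontinuous character does not generate a coarser Hausdorff group topology that you could play against $\tau$. Closing this gap is precisely the content of the Prodanov--Stoyanov argument, which requires substantial additional machinery (Prodanov's lemma on approximation of characters by continuous ones, a compactness argument in a space of measures, and an analysis of the minimal topologies on $\Z$); see \cite[Theorem 2.7.7]{DPS}. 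Note also that the paper itself does not prove this statement --- it is quoted from \cite{PS2} as a known fundamental theorem --- so the standard of comparison here is the published proof, of which your proposal reproduces only the elementary outer shell.
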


A subgroup $H$ of a topological Abelian group $G$ is {\it essential} if  every non-trivial closed subgroup $N$ of $G$ meets $H$ non-trivially. 
The following criterion for   \mi ity of dense subgroups, given by Stephenson \cite{St} and Prodanov \cite{Prod}, describes the minimal Abelian groups as the  dense essential subgroups of the compact Abelian groups: 

\begin{theorem}\label{Tot_Min_Crit}  Let $G$ be a compact Abelian group and $H$ be a dense subgroup of $G$. Then $H$ is minimal if and only if $H$ is essential in $G$.
 \end{theorem}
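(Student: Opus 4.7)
The plan is to prove both directions by contrapositive, with the forward direction a short construction and the backward direction a graph-closure argument in a product of compact groups. Throughout, let $\tau_G$ denote the subspace topology that $G$ induces on $H$; since $H$ is dense in $G$ and $G$ is compact Hausdorff, $G$ is the completion of $(H,\tau_G)$.

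For the forward implication, assume $H$ is not essential, so there is a nontrivial closed subgroup $N\le G$ with $N\cap H=\{0\}$. Let $q\colon G\to G/N$ be the quotient map. Because $N\cap H=\{0\}$, the restriction $q|_H$ is an injective continuous homomorphism into the (compact, hence Hausdorff) group $G/N$; pulling back the topology of $G/N$ along $q|_H$ gives a Hausdorff group topology $\tau'$ on $H$ with $\tau'\subseteq \tau_G$. To see $\tau'$ is \emph{strictly} coarser, pick any $n\in N\setminus\{0\}$; by density of $H$ in $G$, there is a net $(h_\alpha)\subseteq H$ with $h_\alpha\to n$ in $G$. Then $q(h_\alpha)\to 0$ in $G/N$, so $h_\alpha\to 0$ in $\tau'$, while $h_\alpha\not\to 0$ in $\tau_G$ (its $\tau_G$-limit is $n\neq 0$, and $\tau_G$ is Hausdorff). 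Hence $(H,\tau')$ witnesses non-minimality of $H$.

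For the backward implication, suppose $H$ is essential but carries a strictly coarser Hausdorff group topology $\tau'\subsetneq \tau_G$. Since $(H,\tau_G)$ is precompact (subgroup of the compact $G$), so is the coarser $(H,\tau')$; let $\widetilde{H}_{\tau'}$ denote its compact completion. Inside the compact group $K=G\times \widetilde{H}_{\tau'}$, let $\Gamma=\{(h,h):h\in H\}$ be the diagonal graph and $\Delta=\overline{\Gamma}$ its closure, a compact subgroup. Set
\[
M \;=\; \bigl\{\,x\in G : (x,0)\in\Delta\,\bigr\},
\]
a closed subgroup of $G$. I will derive a contradiction with essentiality by showing that $M\neq\{0\}$ while $M\cap H=\{0\}$.

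For $M\neq\{0\}$: since $\tau'\subsetneq \tau_G$, there exists a $\tau_G$-neighborhood $U$ of $0$ in $H$ that is not a $\tau'$-neighborhood of $0$. Indexing by $\tau'$-neighborhoods $V$ of $0$ ordered by reverse inclusion, pick $h_V\in V\setminus U$; then $h_V\to 0$ in $\tau'$ (hence in $\widetilde{H}_{\tau'}$) while $h_V\notin U$ for all $V$. By compactness of $G$, pass to a subnet with $h_V\to z$ in $G$; since $G\setminus U$ is closed, $z\notin U$, so $z\neq 0$. Then $(h_V,h_V)\to(z,0)$ lies in $\Delta$, giving $z\in M\setminus\{0\}$. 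For $M\cap H=\{0\}$: if $x\in M\cap H$, fix a net $(h_\alpha)\subseteq H$ with $(h_\alpha,h_\alpha)\to(x,0)$ in $K$. Then $h_\alpha-x\to 0$ in $\tau_G$, hence in the coarser $\tau'$; on the other hand, $h_\alpha\to 0$ in $\widetilde{H}_{\tau'}$ and $x\in H$ imply $h_\alpha-x\to -x$ in $\tau'$. Hausdorffness of $\tau'$ forces $x=0$. The nontrivial closed subgroup $M\le G$ with $M\cap H=\{0\}$ contradicts the assumed essentiality, completing the proof. The main obstacle is precisely Claim 2: one has to use that $\tau'$ is Hausdorff and that its restriction to $H$ agrees with the topology inherited from the completion $\widetilde H_{\tau'}$, which is what lets the two net-limits be compared.
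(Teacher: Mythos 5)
Your proof is correct. Note that the paper does not actually prove this statement: it is the classical Stephenson--Prodanov criterion, cited from \cite{St,Prod} (see also \cite[Theorem 2.7.7]{DPS}), and your argument is essentially the standard one --- the forward direction via the quotient $G\to G/N$, and the backward direction via the kernel of the canonical map $G\to\widetilde{H}_{\tau'}$, which you realize concretely as the graph closure $\Delta$ rather than by invoking the extension of uniformly continuous homomorphisms to completions. One cosmetic slip: in the claim $M\neq\{0\}$, the set $U$ is a neighborhood of $0$ \emph{in $H$}, so $G\setminus U$ is not closed in $G$ (it contains the dense set $G\setminus H$); you should write $U=U'\cap H$ with $U'$ open in $G$, observe $h_V\in H\setminus U\subseteq G\setminus U'$, and conclude $z\in G\setminus U'$, hence $z\neq 0$ since $0\in U'$. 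With that adjustment the argument is complete.
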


Actually, it is possible to carry out the test of essentiality with only closed subgroups $N$ of $G$ that are either cyclic $p$-group or copies of $\Z$, with $p\in \Prm$ :

 \begin{lemma}\label{loc:criterion}{\rm \cite[Theorem 4.3.7]{DPS}} Let $G$ be a compact Abelian group. A dense subgroup $H$ of $G$ is essential if and only if  $Soc(G) \leq H$ and for every prime $p$ the subgroup $H$ non-trivially meets every subgroup $N\leq G$ with $N \cong \Z_p$.   
  \end{lemma}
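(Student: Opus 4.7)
The forward direction is immediate from the definition. If $H$ is essential in $G$ and $p$ is a prime, then every closed subgroup of $G$ isomorphic to $\Z(p)$ meets $H$ non-trivially; since such a subgroup has prime order, this forces $\Z(p)\leq H$. Hence $Soc(G)=\bigoplus_{p\in\Prm}G[p]\leq H$, and of course every closed $N\cong\Z_p$ meets $H$ non-trivially.

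For the converse, assume $Soc(G)\leq H$ and $H\cap N\neq 0$ for every closed $N\leq G$ with $N\cong \Z_p$. Let $M$ be an arbitrary non-trivial closed subgroup of $G$; the goal is to find a non-trivial element of $H\cap M$. The key structural step is to reduce $M$ to one of the two ``test'' types. I would argue as follows: pick $0\neq x\in M$ and consider the closed (hence compact) monothetic subgroup $C=\overline{\langle x\rangle}\leq M$. If $C$ contains an element of finite order $n>1$, taking a suitable multiple produces an element of prime order $p$, yielding $\Z(p)\leq C\leq M$; since $\Z(p)\leq Soc(G)\leq H$, we are done. Otherwise $C$ is torsion-free.

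Since $C$ is torsion-free compact monothetic, Pontryagin duality identifies $\widehat C$ with a countable divisible torsion subgroup of $\T$, hence with $\bigoplus_{p\in S}\Z(p^\infty)$ for some non-empty set of primes $S$; dualizing gives $C\cong\prod_{p\in S}\Z_p$, so $C$ contains a closed copy of $\Z_p$ for any $p\in S$. This copy sits in $M$, and by the standing hypothesis it meets $H$ non-trivially; thus $H\cap M\neq 0$, proving essentiality.

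The only non-routine point is the structural claim that every non-trivial closed subgroup of a compact Abelian group contains either a copy of $\Z(p)$ or a copy of $\Z_p$. This is where I expect the main work to sit; the cleanest route is the duality argument sketched above (torsion-freeness of $C$ corresponds to divisibility of $\widehat C$, and a divisible subgroup of $\Q/\Z$ is a direct sum of Prüfer groups with multiplicities at most one). Alternatively, one can cite the structure theorem for compact monothetic groups from \cite[Ch.~4]{DPS}. Once this reduction is available, the lemma follows immediately from the two-case analysis above.
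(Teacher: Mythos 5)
The paper does not prove this lemma at all --- it is quoted verbatim from \cite[Theorem 4.3.7]{DPS} --- so there is no internal proof to compare against; your attempt has to stand on its own. Its overall architecture (forward direction by testing on finite cyclic and pro-$p$ subgroups; converse by reducing an arbitrary non-trivial closed subgroup $M$ to one containing either a $\Z(p)$ or a closed copy of $\Z_p$, via the compact monothetic subgroup $C=\overline{\langle x\rangle}$) is correct and is essentially the standard argument. The forward direction and the torsion case of the converse are fine.

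The gap is in the torsion-free case, in the sentence ``Pontryagin duality identifies $\widehat C$ with a countable divisible torsion subgroup of $\T$, hence with $\bigoplus_{p\in S}\Z(p^\infty)$ \dots dualizing gives $C\cong\prod_{p\in S}\Z_p$.'' Torsion-freeness of a compact group $C$ dualizes to \emph{divisibility} of $\widehat C$, not to $\widehat C$ being torsion; the latter is equivalent to $C$ being totally disconnected, which you have not established and which can fail. The group $\K=\widehat{\Q}$ (used throughout \S 2.2 of the paper) is a concrete counterexample: it is compact, monothetic and torsion-free, but connected, so it is not isomorphic to any $\prod_{p\in S}\Z_p$, and its dual $\Q$ is divisible but torsion-free. (Also, $\widehat C$ need not be countable: monothetic only gives an embedding $\widehat C\hookrightarrow \T_d$, hence $|\widehat C|\leq\mathfrak c$.) The correct computation is: a divisible subgroup of $\T_d\cong(\Q/\Z)\oplus\Q^{(\mathfrak c)}$ has the form $\Q^{(\kappa)}\oplus\bigoplus_{p\in S}\Z(p^\infty)$ with $\kappa\leq\mathfrak c$ and each $p$-rank at most $1$, whence $C\cong\K^\kappa\times\prod_{p\in S}\Z_p$. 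Your conclusion survives because $\K$ itself contains $\widehat{\Q/\Z}\cong\prod_p\Z_p$ as a closed subgroup (dualize $0\to\Z\to\Q\to\Q/\Z\to 0$), so a non-trivial $C$ of this form always contains a closed copy of some $\Z_p$; but as written the structural step is false and needs this repair (or simply the citation of \cite[Lemma 4.3.6]{DPS} that you offer as an alternative).
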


The reduction in the above lemma is based on a the localization technique invented by Stoyanov \cite{S2} (see also \cite[Chapter 4]{DPS}) that we recall now. 
 For a prime $p$ an element $x$ of a topological Abelian group $G$ is called  {\it quasi-$p$-torsion} if either $x$ is $p$-torsion or $\langle x\rangle$ is isomorphic to $\Z$ equipped with the $p$-adic topology. The set $td_p(G)$ of all quasi-$p$-torsion elements of $G$ is a subgroup of $G$. In these terms, Lemma \ref{loc:criterion} ensures that $H$ is essential in $G$ if and only if  $td_p(H)$ is essential in $td_p(G)$ for every $p$ (see \cite[Theorem 4.3.7]{DPS}). 

Here we collect some properties of the subgroup $td_p(-)$ that will be used in the sequel, for further properties see \cite[Chapter 4]{DPS}, 

 \begin{fact}\label{prop_td_p} {\rm \cite{DPS}} 
 \begin{itemize}
\item[(1)] Let $f: H \to G$ be a continuous homomorphisms of  topological Abelian groups.  Then for every prime $p$: 
   \begin{itemize}
     \item[(1a)] $f(td_p(H))\leq td_p(G)$, if $H$ is compact and $f$ is surjective, then \\ $f(td_p(H)) = td_p(G)$; 
     \item[(1b)] if $f$ is an embedding, then $f(td_p(H))= td_p(G) \cap f(H)$.
\end{itemize}
    \item[(2)] If $\{G_i:i\in I\}$ is a family of topological Abelian groups, then $td_p(\prod_{i\in I}G_i) = \prod_{i\in I} td_p(G_i)$.
    \item[(3)] If $G$ is a totally disconnected compact Abelian group, then  for every $p\in \Prm$ the subgroup $td_p(G)$ of $G$  is closed and 
$G\cong \prod _{p\in \Prm} td_p(G)$ topologically. 
\end{itemize}
  \end{fact}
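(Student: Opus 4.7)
The plan is to verify each of the three clauses directly from the definition of quasi-$p$-torsion, so that in every case the argument reduces to tracking (i) the behavior of the sequence $p^n x$ and (ii) the shape of the open subgroups of $\langle x\rangle$ in the induced topology.

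For (1a), given $x\in td_p(H)$, I split on the two alternatives in the definition. If $p^n x=0$, then continuity yields $p^n f(x)=0$, so $f(x)$ is $p$-torsion. If $\langle x\rangle\cong \Z_{p\text{-adic}}$, then $p^n x\to 0$ in $H$, hence $p^n f(x)\to 0$ in $G$. When $f(x)$ has finite order, Hausdorffness forces $p^n f(x)=0$ for large $n$, so the order of $f(x)$ is a $p$-power and $f(x)$ is $p$-torsion. When $f(x)$ has infinite order, the restriction $f|_{\langle x\rangle}$ is a continuous group isomorphism $\Z_{p\text{-adic}}\to \langle f(x)\rangle$, and I need to check that the topology induced on $\langle f(x)\rangle$ from $G$ is exactly $p$-adic (not strictly coarser); the open subgroups of $\Z_{p\text{-adic}}$ are the $p^m\Z$, and the condition $p^n f(x)\to 0$ together with the Hausdorff assumption on $G$ forces the induced open subgroups of $\langle f(x)\rangle$ to form a cofinal chain of the form $p^m\langle f(x)\rangle$. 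For the stronger statement under the compact-surjective hypothesis, I use that $f^{-1}(\langle y\rangle)$ is a closed, hence compact, subgroup of $H$ for each $y\in td_p(G)$, and invoke the structure theorem for compact Abelian groups (via Pontryagin duality, matching $p$-primary dual components) to lift $y$ to a quasi-$p$-torsion preimage. Clause (1b) then reduces to (1a) in one direction; the reverse follows because an embedding transports the induced topology faithfully, so $\langle x\rangle$ in $H$ and $\langle f(x)\rangle$ in $G$ carry the same topology, and injectivity transports $p$-torsion back to $x$.

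For (2), the inclusion $td_p(\prod_{i\in I} G_i)\subseteq \prod_{i\in I} td_p(G_i)$ is immediate from (1a) applied to each projection $\pi_i$. For the reverse, given $(x_i)$ with every $x_i\in td_p(G_i)$, either all $x_i$ have $p$-power order bounded by some common $p^N$ (in which case $p^N(x_i)=0$ and $(x_i)$ is $p$-torsion) or $(x_i)$ has infinite order and I must identify the topology on $\langle (x_i)\rangle$ as $p$-adic. A basic neighborhood $\prod_{i\in F}V_i\times \prod_{i\notin F}G_i$ pulls back to $\bigcap_{i\in F}\{k\in\Z : kx_i\in V_i\}$, an intersection of open subgroups of $\Z_{p\text{-adic}}$ each of the form $p^{n_i}\Z$, so the induced topology is at least as coarse as the $p$-adic one; conversely, choosing $V_i$ small (using that the $p$-orders of the $x_i$ are unbounded, or that some $\langle x_i\rangle\cong \Z_{p\text{-adic}}$) realizes every $p^N\Z$ as such a pullback. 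For (3), I apply the standard structure theorem: a compact totally disconnected Abelian group $G$ is profinite, and as such decomposes topologically as $\prod_{p\in\Prm} G_p$, where $G_p$ is the pro-$p$ component. Every element of $G_p$ is quasi-$p$-torsion because pro-$p$ groups are inverse limits of finite $p$-groups; conversely, if $x\in td_p(G)$ and $q\neq p$, then $p^n\pi_q(x)\to 0$ in the pro-$q$ group $G_q$, forcing $\pi_q(x)=0$, so $x\in G_p$. Closedness of $td_p(G)$ follows from closedness of $G_p$ inside the product.

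The main obstacle across the proof is the infinite-order case of (1a) and (2): one must rule out the possibility that the topology induced on $\langle f(x)\rangle$ (respectively, on $\langle (x_i)\rangle$) is a Hausdorff group topology strictly coarser than the $p$-adic one in which $p^n\to 0$. This is the technical heart of the localization machinery of \cite{DPS} and is what legitimizes treating $td_p$ as a well-behaved functor on topological Abelian groups.
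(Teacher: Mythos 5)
The paper offers no proof of this statement at all --- it is quoted as a Fact with a citation to \cite{DPS} --- so there is no internal argument to compare yours against; I am assessing your sketch on its own merits. Clauses (1b), (2) and (3) are essentially sound: (3) is the standard primary decomposition of a profinite Abelian group plus the observation that $p^n y\to 0$ forces $y=0$ in a pro-$q$-group for $q\ne p$; and in (2) your two inclusions go through, although the pullback of a basic neighbourhood is merely a $p$-adic neighbourhood of $0$, not literally a subgroup of the form $p^{n_i}\Z$. The genuine gap sits exactly where you place the ``technical heart,'' namely the infinite-order case of (1a). You assert that ``$p^n f(x)\to 0$ together with the Hausdorff assumption on $G$'' forces the open subgroups of $\langle f(x)\rangle$ to be a cofinal chain $p^m\langle f(x)\rangle$. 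That is not a proof but a restatement of the desired conclusion, and the mechanism you invoke is not the right one: Hausdorffness plus convergence of $p^n f(x)$ to $0$ does not by itself produce \emph{any} proper open subgroup of $\langle f(x)\rangle$ (a priori nothing you have said rules out $\langle f(x)\rangle$ being dense in a connected group, in which case it has no proper open subgroups whatsoever).

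What actually closes this gap is a stronger input that you never invoke. The restriction of $f$ to $\langle x\rangle\cong(\Z,\tau_p)$ is a continuous bijective homomorphism onto $\langle f(x)\rangle$ when $f(x)$ has infinite order, so the induced topology $\tau$ on $\Z\cong\langle f(x)\rangle$ is a Hausdorff group topology \emph{coarser} than the $p$-adic one; and $(\Z,\tau_p)$ is a minimal topological group --- Prodanov's theorem \cite{Prod}, cited in this very paper. (Equivalently: $\tau$, being coarser than the precompact topology $\tau_p$, is precompact, hence corresponds to a point-separating subgroup of the character group $\Z(p^\infty)$ of $(\Z,\tau_p)$, and the only such subgroup is $\Z(p^\infty)$ itself.) Hence $\tau=\tau_p$. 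Without this, or an equivalent essentiality argument for $\Z$ in $\Z_p$, the infinite-order case of (1a) --- and with it the reduction you use in (2) --- is not established. A second, smaller defect: for the surjectivity claim $f(td_p(H))=td_p(G)$ with $H$ compact, you should lift $y\in td_p(G)$ along $f^{-1}\bigl(\overline{\langle y\rangle}\bigr)\to\overline{\langle y\rangle}$ (note that $\langle y\rangle$ itself need not be closed), and the appeal to ``matching $p$-primary dual components'' needs to be made precise, e.g.\ by dualizing the surjection of compact groups and using the divisibility of $\Z(p^\infty)$.
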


Following \cite{S2}, call a topological Abelian group  $G$ {\em strongly $p$-dense} if there exists $k\in \N^*$ such that $p^ktd_p(\widetilde{G})\subseteq G$.
This technical property turns out to be the key towards complete understanding of minimality of powers: 

\begin{theorem}\label{Stoy} {\rm \cite[Theorem 3.5]{S2}} For a minimal  Abelian group $G$ the following are equivalent: 
\begin{itemize}
         \item[(a)] all powers of $G$ are minimal;
         \item[(b)] $G^{\mathfrak c}$ is minimal;
         \item[(c)] $G$ is strongly $p$-dense for every prime $p$. 
\end{itemize}        \end{theorem}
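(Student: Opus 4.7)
My plan is to prove $(a)\Rightarrow(b)$ (trivial), $(c)\Rightarrow(a)$, and the substantive direction $(b)\Rightarrow(c)$. For $(c)\Rightarrow(a)$, fix any cardinal $\kappa$ and verify the essentiality criterion of Lemma~\ref{loc:criterion} for $G^\kappa\leq\widetilde G^\kappa$. Because $td_p$ commutes with products and with completion (Fact~\ref{prop_td_p}(2)), the socle condition reduces to $\widetilde G[p]\subseteq G$ for each $p$, which holds by minimality of $G$ applied to Lemma~\ref{loc:criterion} at the level of $G\leq \widetilde G$. For each subgroup $N\cong\Z_p$ of $\widetilde G^\kappa$, the inclusion $N\subseteq td_p(\widetilde G)^\kappa$ together with strong $p$-density $p^k td_p(\widetilde G)\subseteq G$ yields $0\neq p^k N\subseteq N\cap G^\kappa$ (since $N$ is torsion-free).

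The substance lies in $(b)\Rightarrow(c)$, which I would argue contrapositively. Suppose $G$ fails strong $p$-density for some prime $p$: for each $k\in\N^*$ choose $y_k\in td_p(\widetilde G)$ with $p^k y_k\notin G$. Since $p^k y_k\neq 0$, the element $y_k$ is not $p$-torsion, so by the definition of $td_p$ the closure $\overline{\langle y_k\rangle}$ is topologically isomorphic to $\Z_p$ and $y_k$ corresponds to a unit under this identification. Essentiality of $G$ in $\widetilde G$ forces $G\cap\overline{\langle y_k\rangle}=p^{l_k}\overline{\langle y_k\rangle}$ for some integer $l_k$, and the condition $p^k y_k\notin G$ gives $l_k>k$. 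Now form $\bar y=(y_k)_{k<\omega}$ in $\widetilde G^\omega$, extended by zeros to $\widetilde G^{\mathfrak c}$. The closed subgroup $\overline{\langle\bar y\rangle}$ of $\widetilde G^{\mathfrak c}$ sits inside $\prod_k\overline{\langle y_k\rangle}\cong\Z_p^\omega$ as the diagonal copy $\{a\bar y:a\in\Z_p\}\cong\Z_p$ (closed by compactness of $\Z_p$, injective because $y_0$ is a unit). A nonzero element $a\bar y$ with $a=p^j m$ ($m$ a unit) would lie in $G^{\mathfrak c}$ only if $j\geq l_k>k$ for every $k\in\omega$, which is impossible. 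Hence $\overline{\langle\bar y\rangle}\cap G^{\mathfrak c}=0$, violating essentiality and contradicting the minimality of $G^{\mathfrak c}$ via Theorem~\ref{Tot_Min_Crit}.

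The main obstacle is the analysis of the closed subgroup generated by the diagonal $\bar y$: one has to identify it precisely as $\Z_p$ (not larger) inside the product, and then exploit the unboundedness of the integers $l_k$, which is the sharp quantitative form of the failure of strong $p$-density. This is also the reason the power $\mathfrak c$ in (b) provides no extra strength over $\omega$: the obstruction already manifests on a countable subproduct, so the theorem in fact identifies (c) with the a priori weaker assertion ``$G^\omega$ is minimal'' as well.
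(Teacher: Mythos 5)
The paper offers no proof of this statement at all --- it is quoted verbatim from Stoyanov \cite[Theorem 3.5]{S2} --- so your proposal must be judged on its own merits. Your (a) $\Rightarrow$ (b) and (c) $\Rightarrow$ (a) are fine, but the substantive direction (b) $\Rightarrow$ (c) has a genuine gap. The step ``essentiality of $G$ in $\widetilde G$ forces $G\cap\overline{\langle y_k\rangle}=p^{l_k}\overline{\langle y_k\rangle}$'' is false: essentiality only yields that $G\cap\overline{\langle y_k\rangle}$ is a \emph{nonzero} subgroup of $\overline{\langle y_k\rangle}\cong\Z_p$, and such a subgroup need not be closed, hence need not be of the form $p^{l}\Z_p$. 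The simplest counterexample is $G=\Z$ with the $p$-adic topology (minimal by Prodanov's theorem \cite{Prod}), where $\widetilde G=\Z_p$, $y=1$ and $G\cap\overline{\langle y\rangle}=\Z$. Without that step your final computation collapses: an element $a\bar y$ with $a=p^jm$ and $m$ a non-integer $p$-adic unit can lie in $G^{\cont}$ even though $p^ky_k\notin G$ for all $k$, since $my_k\in G$ does not imply $y_k\in G$ (multiplication by $m^{-1}\in\Z_p$ need not preserve $G$, which is only a $\Z$-module). In effect your argument excludes from $G^{\cont}$ only the multiples $\pm p^j\bar y$, not the whole closed subgroup $\Z_p\bar y$. (A secondary, repairable slip: $p^ky_k\ne 0$ does not make $y_k$ non-torsion; it could have order $p^m$ with $m>k$.)

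There is also decisive external evidence that the argument cannot be patched in its present form: your closing remark that the obstruction ``already manifests on a countable subproduct,'' so that (c) would be equivalent to minimality of $G^\omega$, contradicts a fact recalled in the introduction of this very paper, namely that there exist pseudocompact minimal Abelian groups $G$ with $G^\omega$ minimal but $G^{\omega_1}$ (hence $G^{\cont}$) non-minimal \cite{D8,DS4}. For such $G$ condition (c) fails while $G^\omega$ is minimal, so any proof of (b) $\Rightarrow$ (c) whose witness to non-essentiality is supported on countably many coordinates must be wrong. This is exactly why $\cont$ appears in (b): the correct argument tests essentiality of $G^{\cont}$ against the ``twisted diagonals'' $\overline{\langle(\lambda y)_{\lambda\in\Z_p}\rangle}\le\widetilde G^{\Z_p}$, indexed by all $\cont$ many $p$-adic scalars at once, which forces an entire subgroup $p^{l}\overline{\langle y\rangle}$ (not merely one nonzero element) into $G$; a further uniformization over $y$ then produces the single exponent $k$ required by strong $p$-density.
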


For the proof of Theorem A we need the following lemma: 

\begin{lemma}\label{Str_d} If a \sco \   Abelian group $G$ is strongly $p$-dense for some prime $p$, then $G\supseteq c(\widetilde{G})$. \end{lemma}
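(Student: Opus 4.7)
The plan is to set $K := c(\widetilde G)$; in the setting of interest $\widetilde G$ is compact, so $K$ is a compact connected Abelian group, hence divisible. I would aim at $K \subseteq G$ in three steps: (i) show $td_p(K) \subseteq G$; (ii) show $td_p(K)$ is dense in $K$; (iii) use sequential completeness to upgrade this density to $G \cap K = K$.

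For (i), given $y \in td_p(K)$, divisibility of $K$ furnishes $z \in K$ with $p^k z = y$. If $y$ has $p$-power order $p^m$ then $p^{m+k} z = 0$, and if $y$ has infinite order with $p^n y \to 0$ then $p^{n+k} z = p^n y \to 0$ as well; in either case $z \in td_p(K) \subseteq td_p(\widetilde G)$, the last inclusion via Fact \ref{prop_td_p}(1b) applied to the embedding $K \hookrightarrow \widetilde G$. Hence $y = p^k z \in p^k td_p(\widetilde G) \subseteq G$ by the strong $p$-density hypothesis, giving $td_p(K) \subseteq G$. For (ii), by Pontryagin duality it suffices to show that no non-zero character of $K$ annihilates $td_p(K)$: a continuous $\chi : K \to \T$ has connected image in $\T$, so $\chi$ is either zero or surjective, and in the surjective case Fact \ref{prop_td_p}(1a) yields $\chi(td_p(K)) = td_p(\T) = \Z[1/p]/\Z \neq 0$.

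For (iii), $G \cap K$ is sequentially closed in $K$ (since $G$ is sequentially closed in $\widetilde G$ and $K$ is closed in $\widetilde G$) and by (i)--(ii) contains the dense subgroup $td_p(K)$ of $K$. When $K$ is metrizable this already forces $G \cap K = K$, i.e., $K \subseteq G$. In general I would reduce to the metrizable case via an inverse limit presentation $K = \varprojlim_\alpha K_\alpha$ with each $K_\alpha$ metrizable compact connected Abelian, together with the identification $td_p(K) = \varprojlim_\alpha td_p(K_\alpha)$ (which follows from Fact \ref{prop_td_p}(1a),(2)); then, for any $x \in K$, build a sequence $(y_n) \subseteq td_p(K)$ converging to $x$ by compatibly lifting good approximations from the metrizable quotients $K_\alpha$.

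The main obstacle is exactly this construction in the non-metrizable case: producing a \emph{single} sequence in $td_p(K)$ whose projection to every $K_\alpha$ converges to $\pi_\alpha(x)$. The directed index set can have uncountable cofinality, so no cofinal sequence of indices exists and one cannot directly diagonalize; instead one has to exploit that sequential convergence in an inverse limit is testable one $\alpha$ at a time, combined with the surjectivity $td_p(K) \twoheadrightarrow td_p(K_\alpha)$ from Fact \ref{prop_td_p}(1a), to arrange that for each fixed $\alpha$ the projections $\pi_\alpha(y_n)$ eventually lie in any prescribed neighborhood of $\pi_\alpha(x)$ in the metrizable group $K_\alpha$.
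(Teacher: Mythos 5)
Your steps (i) and (ii) are sound and agree in substance with the first half of the paper's proof: the paper deduces $td_p(c(\widetilde G))\subseteq G$ from the divisibility of $td_p(c(\widetilde G))$ (a consequence of the connectedness, hence divisibility, of $c(\widetilde G)$), which is exactly what your division argument establishes. The genuine gap is in step (iii), and it is the crux of the lemma. Since $G$ is only assumed sequentially closed in $\widetilde G$, plain topological density of $td_p(K)$ in $K=c(\widetilde G)$ --- which is all your duality argument delivers --- does not suffice; you need \emph{sequential} density, and you do not prove it. Your proposed reduction to metrizable quotients via an inverse limit presentation $K=\lim_{\leftarrow}K_\alpha$ runs into exactly the obstruction you yourself name: when the directed index set has uncountable cofinality there is no countable cofinal subfamily to diagonalize along, and ``arranging that for each fixed $\alpha$ the projections eventually lie in any prescribed neighborhood'' is just a restatement of what must be proved, not a construction of the required single sequence. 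The paper closes this hole by invoking \cite[Theorem 2.9(a)]{DT1}, which asserts precisely that $td_p(K)$ is sequentially dense in the compact connected Abelian group $K$; this external input is what your proposal is missing.

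If you want a self-contained repair, the correct reduction is to a \emph{product} of metrizable groups rather than an inverse limit. Since $\widehat K$ is torsion-free it embeds in $\Q^{(\kappa)}$ for $\kappa=w(K)$, so there is a continuous surjection $q:\KK^\kappa\to K$. In $\KK^\kappa$ the approximating sequence can be built coordinatewise: each coordinate lives in the metrizable group $\KK$, where $td_p(\KK)$ is dense (by your duality argument), so for a lift $\tilde x=(\tilde x_\alpha)$ of the target point one chooses $y_{n,\alpha}\in td_p(\KK)$ within distance $1/n$ of $\tilde x_\alpha$; then $y_n=(y_{n,\alpha})\in td_p(\KK)^\kappa=td_p(\KK^\kappa)$ by Fact \ref{prop_td_p}(2), and $y_n\to\tilde x$ because convergence in a product is coordinatewise with a uniform rate here. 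Pushing forward by $q$ and using $q(td_p(\KK^\kappa))=td_p(K)$ from Fact \ref{prop_td_p}(1a) yields a sequence in $td_p(K)$ converging to the given point of $K$. With sequential density in hand, your step (iii) then concludes exactly as the paper does.
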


\begin{proof} In fact $c(\widetilde{G})$ is connected hence divisible,  therefore $td_p(c(\widetilde{G}))$ is divisible as well. Now strong $p$-density implies $td_p(c(\widetilde{G}))\subseteq G$. Since $td_p(c(\widetilde{G}))$ is sequentially dense in $c(\widetilde{G})$ by  \cite[Theorem 2.9(a)]{DT1},  it follows also that $G\cap c(\widetilde{G})$ is sequentially dense in $c(\widetilde{G})$. Now the sequential completeness of $G$ yields $G\supseteq c(\widetilde{G})$.  \end{proof}

The next lemma resolves the problem of minimality for finite products of sequentially complete \mi \ groups:

\begin{lemma}\label{cc_perf_min}  Let $G$ be a \sco \ minimal group. Then  $G\times H$ is \mi\ for every minimal group $H$.
In particular, any finite product of \sco \ minimal groups is minimal. 
\end{lemma}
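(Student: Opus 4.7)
The plan is to apply the Stephenson--Prodanov criterion (Theorem \ref{Tot_Min_Crit}): since $G \times H$ is a dense subgroup of the compact Abelian group $\widetilde G \times \widetilde H = \widetilde{G\times H}$, minimality of $G \times H$ is equivalent to essentiality of $G \times H$ in $\widetilde G \times \widetilde H$. By Lemma \ref{loc:criterion} together with Fact \ref{prop_td_p}(2), this reduces, for each prime $p$, to: (a) $Soc(\widetilde G) \times Soc(\widetilde H) = Soc(\widetilde G \times \widetilde H) \subseteq G \times H$, which is immediate from the essentiality of $G$ in $\widetilde G$ and of $H$ in $\widetilde H$; and (b) every closed subgroup $M \leq \widetilde G \times \widetilde H$ with $M \cong \Z_p$ meets $G \times H$ nontrivially.

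The key consequence of sequential completeness of $G$ is the following structural fact: if $L \leq \widetilde G$ is a closed subgroup with $L \cong \Z_p$, then $L \cap G$ is sequentially closed in the metrizable group $L$, hence closed, and by essentiality of $G$ it has the form $p^{k(L)}L$ for some integer $k(L) \geq 0$. No analogous identification holds on the $H$-side; only bare essentiality of $H$ is used there.

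For (b) I run a short case analysis on the projections $M_i = \pi_i(M)$, each of which is isomorphic to $\Z_p$, $\Z(p^\ell)$, or trivial. The case in which both $M_i$ are finite is vacuous, since $M \cong \Z_p$ embeds into $M_1 \times M_2$. If some $M_i$ is trivial then $M$ sits in a single factor, and essentiality of the remaining factor supplies the element. In either mixed case, say $M_1 \cong \Z(p^\ell)$ and $M_2 \cong \Z_p$, the kernel $\ker(\pi_1|_M) = p^\ell M$ is a copy of $\Z_p$ contained in $\{0\} \times \widetilde H$, which $\pi_2$ identifies with the closed $\Z_p$-subgroup $p^\ell M_2 \subseteq \widetilde H$, so essentiality of $H$ produces a nonzero element of $M \cap (\{0\} \times H)$; the symmetric subcase is handled identically using essentiality of $G$. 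In the main case $M_1 \cong M_2 \cong \Z_p$ both $\pi_i|_M$ are isomorphisms of $p$-adic groups, so for any nonzero $h \in H \cap M_2$ (supplied by essentiality of $H$) the lift $x = (\pi_2|_M)^{-1}(h) \in M$ and the integer $k = k(M_1)$ yield $p^k x \in M$, which is nonzero since $M$ is torsion-free, and whose first coordinate $\pi_1(p^k x) \in p^k M_1 = G \cap M_1 \subseteq G$ and second coordinate $p^k h \in H$ place it in $M \cap (G \times H)$.

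The main obstacle is this diagonal case: two nonzero subgroups of $\Z_p$ can intersect trivially in general (e.g.\ $\Z$ and $\alpha\Z_p$ for a $p$-adic irrational $\alpha$), so it is the combination of the structural identification $G \cap M_1 = p^k M_1$ with torsion-freeness of $\Z_p$ that forces the required nonzero intersection. The final ``in particular'' assertion follows by induction on the number of factors, since sequential completeness passes to finite products.
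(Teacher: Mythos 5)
Your proof is correct, but it takes a genuinely different route from the paper's. The paper's proof is a two-line reduction: it fixes $x\in td_p(G)$, notes that $\overline{\langle x\rangle}$ is either finite or a metrizable compact copy of $\Z_p$ and hence contained in $G$ by sequential completeness, and then invokes the external criterion \cite[Proposition 6.1.13]{DPS} (every quasi-$p$-torsion element of $G$ lying in a compact subgroup guarantees that $G\times H$ is minimal for every minimal $H$). You instead verify essentiality of $G\times H$ in $\widetilde G\times\widetilde H$ directly via Theorem \ref{Tot_Min_Crit} and Lemma \ref{loc:criterion}, running a case analysis on the projections of a closed copy of $\Z_p$; your case analysis is complete and each case is handled correctly (in particular the diagonal case, where the identification $G\cap M_1=p^kM_1$ combined with torsion-freeness of $M$ is exactly what is needed, and where you correctly observe that only bare essentiality is required on the $H$-side). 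The underlying use of sequential completeness is the same in both arguments --- the closure of a quasi-$p$-torsion cyclic subgroup is metrizable, so sequential closedness of $G$ in $\widetilde G$ forces $G\cap L$ to be a closed (hence open, hence of the form $p^kL$) subgroup of any closed $L\cong\Z_p$ --- so your write-up is in effect a self-contained proof of the special case of \cite[Proposition 6.1.13]{DPS} that the paper cites. What your approach buys is independence from that reference at the cost of length; what the paper's buys is brevity and the full strength of the cited criterion (which also underlies the implication (b) $\to$ (a) in Theorem A). Your handling of the ``in particular'' clause by induction, using that sequential completeness passes to finite products, matches the intended argument.
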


\begin{proof} Fix an arbitrary prime $p$ and pick $x\in td_p(G)$. It suffices to check that $x$ is contained is a compact subgroup of $G$, then  
\cite[Proposition 6.1.13]{DPS} (see also \cite[Proposition 4.1]{DS1}) applies. The case when $C= \langle x \rangle$ is finite is trivial. Assume that $C$ is infinite. 
Then $C$ is metizable, being isomorphic to $\Z$ equipped with the $p$-adic topology. Hence its compact closure $\overline C$ in $\widetilde G$
is contained in $G$, since $G$ is \sco. 
\end{proof}

The \mi ity is not preserved by taking quotients (even quotients of minimal group with respect to finite subgroups may fail to be minimal). In item (b) of the next lemma
one can find an instance when  \mi ity is preserved by taking quotients. In item (a) we recall a ``three space property" from \cite{EDS}.

\begin{proposition}\label{3space}
Let $G$ be a topological Abelian group and let $K$ be a compact subgroup of $G$. Then
\begin{itemize}
\item[(a)] \cite{EDS} if $G/K$ is minimal, then $G$ is minimal; 
\item[(b)] if $G$ is minimal and $K$ is connected, then $G/K$ is minimal. 
\end{itemize}
\end{proposition}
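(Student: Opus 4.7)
Part (a) is a direct citation of \cite{EDS}, so my plan focuses on proving (b). The approach is to reduce minimality of $G/K$ to essentiality inside its completion and then verify essentiality locally using Lemma \ref{loc:criterion}.

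Since $K$ is compact it is closed in $\widetilde G$, so the quotient $\widetilde{G}/K$ is a compact Abelian group and $G/K$ embeds densely into it. Thus $\widetilde{G}/K$ serves as the completion of $G/K$, and by Theorem \ref{Tot_Min_Crit} minimality of $G/K$ is equivalent to essentiality of $G/K$ in $\widetilde{G}/K$. I will verify this essentiality through Lemma \ref{loc:criterion}, which amounts to checking (i) $Soc(\widetilde{G}/K)\leq G/K$, and (ii) for every prime $p$ and every subgroup $N\leq \widetilde{G}/K$ with $N\cong \Z_p$, the intersection $(G/K)\cap N$ is non-trivial.

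For (i), given $\xi = x+K \in Soc(\widetilde{G}/K)$ with $p\xi = 0$, I use that $K$ is divisible (being compact and connected) to pick $k \in K$ with $pk = px$. Then $x - k \in \widetilde G[p] \leq Soc(\widetilde G)$, and Lemma \ref{loc:criterion} applied to the minimality of $G$ in $\widetilde G$ gives $Soc(\widetilde G) \leq G$. Hence $\xi = (x - k) + K \in G/K$, as required.

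Step (ii) is the main obstacle. Letting $\pi\colon \widetilde G \to \widetilde G/K$ denote the quotient, set $M = \pi^{-1}(N)$; this is a closed subgroup of $\widetilde G$ containing $K$ with $M/K \cong \Z_p$. The crux is to establish that the short exact sequence $0 \to K \to M \to \Z_p \to 0$ of compact Abelian groups splits topologically, producing a closed subgroup $L \cong \Z_p$ of $M$ with $M = K \oplus L$. I intend to deduce this by Pontryagin duality: the dual sequence $0 \to \Z(p^\infty) \to \widehat M \to \widehat K \to 0$ of discrete Abelian groups splits because $\Z(p^\infty)$ is a divisible, hence injective, object of the category of Abelian groups. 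Once $L$ is available as a $\Z_p$-subgroup of $\widetilde G$, essentiality of $G$ in $\widetilde G$ (again via Lemma \ref{loc:criterion}) yields a non-zero $g \in G \cap L$. Because $L \cap K = 0$, the coset $g + K$ is a non-zero element of $N \cap (G/K)$, completing the verification of (ii) and thereby the proof.
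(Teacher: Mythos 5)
Your proposal is correct and follows essentially the same route as the paper: reduce to essentiality of $G/K$ in $\widetilde G/K$ via Theorem \ref{Tot_Min_Crit} and Lemma \ref{loc:criterion}, handle the socle using divisibility of the compact connected group $K$, and split the extension $0\to K\to \pi^{-1}(N)\to \Z_p\to 0$ by dualizing and invoking injectivity of the divisible group $\Z(p^\infty)$. No gaps; this matches the paper's argument step for step.
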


\begin{proof} Let us only mention that (a) was proved in  \cite{EDS} without the assumption that $G$ is Abelian and compactness of $K$ is relaxed to 
the conjunction of minimality and completeness. 

(b) First we note that $G/K$ is precompact, by Theorem \ref{Prec:Thm}. Since $K$ is compact, this implies that $G$ is precompact too, 
so $\widetilde G$ is compact and $\widetilde G/K \cong \widetilde{G/K}$. 
In order to apply Lemma \ref{loc:criterion} and Theorem \ref{Prec:Thm} we need to check first that 
 $Soc(\widetilde{G/K}) \leq G/K$. Pick $a' = a+ K\in Soc(\widetilde{G/K})$. Then $pa\in K$. 
 Since $K$ is a connected compact Abelian group, $K$ is divisible. Hence, $pa = pb$, for some $b \in K$. 
 Then $t=a-b \in Soc(\widetilde{G}) \leq G$, by Lemma \ref{loc:criterion} and Theorem \ref{Prec:Thm}, since $G$ is minimal. 
Therefore, $a' = t + b + K= t+ K \in G/K$. In order to conclude with the application of Lemma \ref{loc:criterion}, consider
a subgroup $\Z_p\cong \N^* \leq \widetilde{G}/K$ and put $q(N_1) =N$, where $q: \widetilde{G}\to \widetilde{G}/K$ is
the canonical homomorphism. In order to see that the short exact sequence $0\to K \to L \to N \to 0$ splits,
consider  its dual short exact sequence $0\to \widehat N \to \widehat N_1 \to \widehat K \to 0$ which splits, due to the divisibility of $\widehat N \cong \Z(p^\infty)$. 
 Hence, $L \cong K \times N_1$, where $N_1\cong \Z_p$ is a closed subgroup of $L$ with $q(N_1) = N$, so clearly the restriction $q\restriction_{N_1} :N_1 \to N$ is an isomorphism.
Again by Lemma \ref{loc:criterion} and Theorem \ref{Prec:Thm}, applied to the 
minimal group  $G$, $N_1\cap G \ne \{0\}$.  Since, $q\restriction_{N_1}$ is an isomorphism, this implies $N\cap (G/K) \ne \{0\}$.  
\end{proof}

\begin{corollary}\label{coro:3space}
Let $G$ be a topological Abelian group such that $c(G)$ is compact. Then $G$ is minimal if and only if $G/c(G)$ is minimal. 
\end{corollary}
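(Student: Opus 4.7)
The plan is to observe that this corollary is essentially an immediate consequence of Proposition \ref{3space}, applied with $K = c(G)$. Indeed, $c(G)$ is, by definition of the connected component, a connected closed subgroup of $G$, and by the hypothesis it is also compact. So $c(G)$ satisfies the hypotheses of both parts of Proposition \ref{3space}.

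For the forward direction, assume $G$ is minimal. Since $c(G)$ is compact and connected, Proposition \ref{3space}(b) applies and yields that $G/c(G)$ is minimal. For the backward direction, assume $G/c(G)$ is minimal. Since $c(G)$ is compact, Proposition \ref{3space}(a) applies and yields that $G$ is minimal.

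I do not anticipate any obstacle here: the only thing that needs to be checked in order to invoke Proposition \ref{3space} is that $c(G)$ is both compact (given) and connected (automatic from its definition as the connected component of the identity). Everything else is already handled in the statement of Proposition \ref{3space}. The brevity of the proof is what one should expect, since the corollary is essentially the conjunction of the two parts of that proposition in the special case $K = c(G)$.
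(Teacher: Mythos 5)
Your proof is correct and is exactly the argument the paper intends: the corollary is stated without proof as an immediate consequence of Proposition \ref{3space}, with the forward direction from part (b) (using that $c(G)$ is compact and connected) and the backward direction from part (a) (using that $c(G)$ is compact). Nothing is missing.
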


Now we prove that the property (3) from Fact \ref{prop_td_p} can be extended to \sco \ minimal groups:

\begin{theorem}\label{Products}
Let $G$ be a \sco\ \mi \ totally disconnected Abelian group. Then  for every $p\in \Prm$ the subgroup $td_p(G)$ of $G$  is closed and $G\cong \prod _{p\in \Prm} td_p(G)$ topologically.
\end{theorem}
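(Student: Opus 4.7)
The plan has two stages: first, establish that $\widetilde G$ is totally disconnected; then, using the product decomposition $\widetilde G \cong \prod_{p \in \Prm} H_p$ with $H_p := td_p(\widetilde G)$ given by Fact \ref{prop_td_p}(3), transfer it to $G$ via sequential completeness.

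The first stage is the main obstacle. Set $C := c(\widetilde G)$ and suppose, for contradiction, $C \neq \{0\}$. By minimality and Lemma \ref{loc:criterion}, $G$ meets every closed $\Z_p$-subgroup $N \leq \widetilde G$ non-trivially, and sequential completeness of $G$ combined with metrizability of such an $N$ forces $G \cap N$ to be closed in $N$, hence equal to $p^k N$ for some $k \geq 0$. The subgroup $td_p(C)$ is divisible (because $C$ is compact connected, hence divisible) and sequentially dense in $C$ by \cite[Theorem 2.9(a)]{DT1}. The aim is to use the finite-index information on each closed $\Z_p \leq C$ together with the divisibility of $td_p(C)$ to obtain the absorption $td_p(C) \subseteq G$; then sequential density of $td_p(C)$ in $C$ combined with sequential completeness of $G$ yields $C \subseteq G$, contradicting $c(G) = \{0\}$. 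The technical heart of this step lies in passing from the a priori non-uniform essentiality data on $\Z_p$-subgroups to the uniform absorption $td_p(C) \subseteq G$.

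Once $\widetilde G \cong \prod_p H_p$ is at our disposal, Fact \ref{prop_td_p}(1b) gives $td_p(G) = H_p \cap G$, which is closed in $G$. Writing $\pi_p : \widetilde G \to H_p$ for the projections, it remains to prove that the natural map $G \to \prod_p td_p(G)$, $x \mapsto (\pi_p(x))$, is a topological isomorphism. For the containment $\pi_p(x) \in G$: fix $x \in G$ and a prime $p$, and use the Chinese Remainder Theorem to choose integers $m_n$ with $m_n \equiv 1 \pmod{p^n}$ and $q^n \mid m_n$ for every prime $q \leq n$ with $q \neq p$. Coordinate-by-coordinate in $\widetilde G = \prod_q H_q$, on the $p$-th factor $(m_n - 1)\pi_p(x) = p^n t_n \pi_p(x) \to 0$ in the compact pro-$p$ group $H_p$ (in any compact pro-$p$ Abelian group, $p^n$ times any bounded sequence tends to $0$), and for each fixed prime $q \neq p$ the $q$-th factor $m_n \pi_q(x) \to 0$ in $H_q$ once $n \geq q$, by the same reasoning. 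Hence $m_n x \to \pi_p(x)$ in the product topology; the sequence $(m_n x) \subset G$ is Cauchy in $\widetilde G$, and sequential completeness of $G$ gives $\pi_p(x) \in G \cap H_p = td_p(G)$. Conversely, given $(y_p) \in \prod_p td_p(G)$, enumerate $\Prm = \{p_1, p_2, \dots\}$ and set $s_n := \sum_{i \leq n} y_{p_i} \in G$ (each $y_{p_i}$ identified with its embedding in $\widetilde G$, zero on every coordinate $\neq p_i$); then $s_n \to (y_p)$ in the product topology of $\widetilde G$, since the difference has zero components on any prescribed finite set of coordinates for $n$ sufficiently large, so $(y_p) \in G$ by sequential completeness. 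The resulting bijection $G \to \prod_p td_p(G)$ is the restriction to $G$ of the topological isomorphism $\widetilde G \cong \prod_p H_p$, and is therefore a homeomorphism.
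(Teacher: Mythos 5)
Your second stage (the decomposition of $G$ once $\widetilde G$ is known to be totally disconnected) is correct and essentially identical to the paper's argument: the same Chinese-Remainder trick produces $\pi_p(G)\subseteq G$, and your partial-sums argument for $\prod_p td_p(G)\subseteq G$ is a clean equivalent of the paper's sequential-density step. The problem is your first stage.

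The assertion that $\widetilde G$ is totally disconnected is the genuinely hard part of this theorem, and you have not proved it: you describe ``the aim'' and explicitly defer ``the technical heart,'' namely the passage from the pointwise essentiality data to the uniform absorption $td_p(C)\subseteq G$. This deferral is not a routine verification. The information you actually have is that for each closed $N\cong\Zp$ in $C$ one gets $G\cap N=p^{k_N}N$ with an exponent $k_N$ \emph{depending on} $N$, and this pointwise data is perfectly consistent with $td_p(C)\not\subseteq G$: if $x\notin G$ and $p^k y=x$ with $\overline{\langle y\rangle}\cong\Zp$, one computes $G\cap\overline{\langle y\rangle}=p^{2k}\overline{\langle y\rangle}$, so the exponents can grow without bound along a divisibility chain and the divisibility of $td_p(C)$ never forces any given element into $G$. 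What is needed is a \emph{uniform} bound, and in the paper this is exactly the content of Lemma \ref{coroVU} (and of Theorem C(a), which the paper invokes at this point and whose proof in \cite{DT1} rests on that lemma); the uniformity is extracted by a Baire-category argument on the sequence module $E\subseteq\Zp^\omega$, not by any soft manipulation of individual $\Zp$-subgroups. Note also that Lemma \ref{coroVU} as stated applies to pro-$p$ completions, not to the connected group $C$ directly, so even quoting it would not immediately close your gap; the reduction to the pro-$p$ case is part of the work done in \cite[Theorem 3.6]{DT1}. As written, your proof establishes the theorem only modulo the statement that a totally disconnected \sco\ minimal Abelian group has totally disconnected completion, which is precisely the nontrivial input.
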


\begin{proof} The completion $\widetilde G$ of $G$ is compact by Prodanov-Stoyanov's Theorem \ref{Prec:Thm}. Moreover, $\widetilde G$ is totally disconnected by 
Theorem C (its proof does not depend on the proof of the current theorem). Hence, $\widetilde G=\prod_pK_p$, where $K_p=td_p(\widetilde G)$ for every $p\in \Prm$, by Fact \ref{prop_td_p}(3).
Hence, for each $p\in \Prm$ the subgroup $td_p(G)=G\cap K_p$ of $G$ is closed, by items (1b) and (3) of Fact \ref{prop_td_p}. 
Since $G$ is \sco, to prove that $P:= \prod_{p\in \Prm}  td_p(G) \leq G$ it suffices to check that  the subgroup $S=\bigoplus  _{p\in \Prm} td_p(G)$ of  $G$ is sequentially dense in $P$. In fact, let $x=(x_p) \in P$. Then obviously $x\in \prod_{p\in \Prm} \langle x_p\rangle  $. As  this  subgroup of $P$ is metrizable, $x\in \overline{\bigoplus  _{p\in \Prm} \langle x_p \rangle }$ and $\bigoplus  _{p\in \Prm} \langle x_p \rangle  \sq S$,  
this proves the sequential density of $S$ in $P$.

\smallskip

In order to prove the desired inclusion $G \leq P$ it suffices to see that the canonical projection $\pi_p: \prod_{q\in \Prm}K_q\to K_p$ 
satisfies $\pi_p(G) \leq td_p(G)$, where we consider $\pi_p(G)$, as well as $K_p = \pi_p(K)$, as subgroups of the product $K$.
Since,  $td_p(G)=G\cap K_p$ and obviously $\pi_p(G) \leq K_p$, it suffices to check that $\pi_p(G)\leq G$,   

To  prove  this assertion take $x=(x_q)_{q\in \Prm} \in G$,  fix a prime $p$ and let $p_1, \ldots , p_n, \ldots  $ be an enumeration of the set $\Prm \setminus\{p\}$. 
Then we can consider $y=\pi_p(x)$ as the element of the product, such that $\pi_p (y)=x_p$, while  
  $\pi_{p_n} (y)= 0$  for each $n\in \N^*$. We have to prove that $y\in G$.
By the  Chinese  theorem  of remainders that there exists a sequence  $\{k_{n}\}$  in ${\Z}$   such that 
$$
k_n\equiv 1 \ (\hbox{mod }p^n)\  \ \hbox{ and }\ \ k_n\equiv 0 \ (\hbox{mod }(p_1\ldots p_n)^n)
$$ 
for each $n\in \N^*$. Then $k_n \rightarrow 1$  in the $p$-adic topology of $\Z$ and $k_n \rightarrow 0 $ in the $p_m$-adic topology 
of $\Z$ 
for each $m\in \N^*$. This yields 
\begin{equation}\label{CRT}
\lim_n k_n x_p = x_p  \ \ \mbox{ in }\ \  A_p \ \ \mbox{ and }\ \  \lim_n k_n x_{p_m} = 0  \ \ \mbox{ in }\ \ A_{p_m} \ \ \mbox{ for each }\ \  m\in \N^*
\end{equation}
since the topology of $A_q$ is coarser than the $q$-adic topology of $A_q$ for every $q \in \Prm$. 
 Since $G$ is \sco,    $\lim {k_n}x \in G$. On the other hand, (\ref{CRT}) entails $y = \lim {k_n}x \in G$. 
\end{proof}

We denote by ${\mathbb K}$ the (compact)  Pontryagin dual of the discrete group $\Q$, i.e., ${\mathbb K}$ is the group of all homomorphisms $\Q\to \T$ equipped with the topology of pointwise convergence. 
Since the Pontryagin dual of a compact torsion-free Abelian group is  a discrete divisible Abelian group, 
every compact torsion-free Abelian group is a product of copies of ${\mathbb K}$ and the groups $\Z_p$.
Hence, the next proposition reduces various problems related to \mi \ Abelian groups $G$ to the case of dense \mi \ subgroups of the powers ${\mathbb K}^\alpha$ or $\Z_p^\alpha$, where $\alpha = w(G)$.

\begin {proposition}\label{tor_free}  Let $G$ be a minimal  Abelian group. Then there exists a  compact torsion-free  Abelian 
group $K$ with $w(K)=w(G)$, a dense minimal  subgroup $G_1$ of $K$ and a compact totally disconnected subgroup $N$ of $G_1$ such that: 
\begin{itemize}
  \item[(a)] the quotient group $G_{1}/N$ is isomorphic to $G$ and consequently $K/N \cong  \widetilde {G}$;
  \item[(b)] for every $p\in \Prm$ $G$ is strongly $p$-dense if and only if $G_1$ is strongly $p$-dense; 
  \item[(c)] $G_1$ is \cc \ (resp., \sco, $\omega$-bounded,  \td) if $G$ is \cc \ (resp., \sco, $\omega$-bounded,  \td).    
\end{itemize} 
In case $G$ is a minimal quasi $p$-torsion group for some prime $p$, then $K=\Z_p^{w(G)}$.\end{proposition}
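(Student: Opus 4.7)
The plan is to build $K$ via Pontryagin duality. Set $A := \widehat{\widetilde{G}}$, a discrete Abelian group with $|A| = w(\widetilde{G}) = w(G)$, and let $D$ denote its divisible hull; then $A$ is essential in $D$, so $D/A$ is torsion, and $|D| = |A| = w(G)$ (the finite case is trivial). Dualising $A \hookrightarrow D$ produces a surjection $\pi : K \twoheadrightarrow \widetilde{G}$, where $K := \widehat{D}$ is compact torsion-free with $w(K) = |D| = w(G)$, and $N := \ker \pi = \widehat{D/A}$ is compact and totally disconnected, because duality sends torsion discrete groups to totally disconnected compact ones. Define $G_1 := \pi^{-1}(G)$; then $G_1$ is dense in $K$, the induced map $G_1/N \to G$ is a topological isomorphism, and taking completions yields $K/N \cong \widetilde{G}$, proving (a). Moreover $G_1$ is minimal by Proposition \ref{3space}(a), since $N$ is compact and $G_1/N \cong G$ is minimal.

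For (b), note that $\widetilde{G_1} = K$ because $G_1$ is dense in $K$, and Fact \ref{prop_td_p}(1a) gives $\pi(td_p(K)) = td_p(\widetilde{G})$ for every prime $p$; combined with $G_1 = \pi^{-1}(G)$, the containment $p^k td_p(K) \subseteq G_1$ is equivalent to $p^k td_p(\widetilde{G}) \subseteq G$, giving the equivalence of strong $p$-density. For (c), the restriction $\pi\restriction_{G_1}: G_1 \to G$ is a perfect surjection (compact fibres $x+N$, closed because $N$ is compact), so countable compactness transfers from $G$ to $G_1$. Any countable $S \subseteq G_1$ maps to a subset of a compact subgroup $H \leq G$, and $\pi^{-1}(H) \leq G_1$ is a compact subgroup containing $S$, yielding $\omega$-boundedness. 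A Cauchy sequence in $G_1$ converges in $K$, and its $\pi$-image converges in $G$ by sequential completeness of $G$, so the limit lies in $\pi^{-1}(G) = G_1$. For total disconnectedness, $\pi(c(G_1))$ is a connected subgroup of the totally disconnected $G$, hence trivial, so $c(G_1) \subseteq N$; since $c(G_1)$ is connected and $N$ is totally disconnected, $c(G_1) = \{0\}$.

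For the final assertion, if $G$ is quasi-$p$-torsion, every character of $\widetilde{G}$ restricted to $G$ takes values in the $p$-primary part of $\T$ (each $\langle x \rangle$ for $x \in G$ has closure in $\widetilde{G}$ that is $p$-torsion or a quotient of $\Z_p$, which must map to a finite cyclic $p$-group in $\T$), so $A$ is $p$-primary. Enlarging $D$, if needed, by a direct summand isomorphic to $\Z(p^\infty)^{(w(G))}$ while preserving $|D| = w(G)$ and the torsion quotient $D/A$ gives $D \cong \Z(p^\infty)^{(w(G))}$, whence $K = \widehat{D} \cong \Z_p^{w(G)}$. The main technical point I anticipate is the totally disconnected case of (c), where $K$ itself may be connected (e.g.\ when $\widetilde{G}$ is), so one must argue via the containment $c(G_1) \subseteq N$ rather than via $c(K)$; the remaining properties of (c) follow from standard transfer results for extensions by a compact normal subgroup.
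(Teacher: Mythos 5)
Your construction of $K$, $G_1$, $N$ via the divisible hull of $A=\widehat{\widetilde{G}}$ is correct and is, in substance, the standard ``torsion-free resolution'' that the paper does not reprove but delegates to \cite[Proposition 2.7]{DT1}; so for (a) and the sequential-completeness part of (c) you are supplying the argument the paper only cites, and your transfer arguments for countable compactness (perfectness of $\pi\restriction_{G_1}$), $\omega$-boundedness (pull back a compact subgroup of $G$), and total disconnectedness ($c(G_1)\subseteq N$) correctly fill in what the paper dismisses as ``clear''. Your proof of (b) is essentially identical to the paper's, only phrased more symmetrically: since $G_1=\pi^{-1}(G)$, the inclusion $p^k td_p(K)\subseteq G_1$ is equivalent to $p^k td_p(\widetilde{G})\subseteq G$ via $\pi(td_p(K))=td_p(\widetilde{G})$. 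Minimality of $G_1$ via Proposition \ref{3space}(a) is also the intended route.

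The one genuine gap is in the final assertion. You claim that $A=\widehat{\widetilde{G}}$ is $p$-primary because every character $\chi$ satisfies $\chi(G)\subseteq\Z(p^\infty)$. That inclusion is correct, but it does not make $\chi$ a torsion element of $A$: you would need $\chi(G)$ to be a \emph{bounded} (hence finite) subgroup of $\Z(p^\infty)$, and the pointwise bounds $\chi(\overline{\langle x\rangle})\subseteq\Z(p^{k_x})$ give no uniform bound. Concretely, $\Z(p^\infty)$ with the topology induced from $\T$ is a dense quasi-$p$-torsion subgroup of $\T$, yet $\widehat{\T}=\Z$ is torsion-free; so quasi-$p$-torsionness of a dense subgroup alone does not force the dual to be a $p$-group. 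You must use minimality: by essentiality and Lemma \ref{loc:criterion}, $td_q(G)$ is essential in $td_q(\widetilde{G})$ for every prime $q$, and since $td_q(G)=0$ for $q\ne p$ this forces $td_q(\widetilde{G})=0$; moreover $\widetilde{G}$ cannot have a nontrivial connected component (a nontrivial connected compact Abelian group maps onto $\T$, hence has nontrivial $td_q$ for every $q$). Thus $\widetilde{G}$ is a pro-$p$-group and $A$ is a $p$-group; from there your enlargement of $D$ to $\Z(p^\infty)^{(w(G))}$ and the conclusion $K\cong\Z_p^{w(G)}$ go through.
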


\begin{proof}
(a) and the part of (c) about sequential completeness are proved in \cite[Proposition 2.7]{DT1}. The remaining part of (c)
(concerning total disconnectedness, countable compactness and $\omega$-boundedness) is  clear. 

To check (b) denote by $q: K \to  \widetilde {G}$ the quotient map. Since $q(td_p(K) = td_p( \widetilde {G})$ (\cite{DPS}), 
strong $p$-density of $G_1$ provides $k\in \N$ with $p^ktd_p(K) \leq G_1$, which obviously implies strong $p$-density of $G$. 
If $G$ is strongly $p$-dense, say $p^ktd_p(\widetilde {G}) \leq G$, then for $x\in td_p(G_1)$ one has 
$q(x) \in td_p(\widetilde {G})$, so $p^kq(x) = q(p^kx) \in G$. Therefore, $p^k x \in G_1$.  
\end{proof}

\begin{corollary}\label{coro:tor_free} If $G$ is a \sco\ minimal   Abelian group with $c(G)=c(\widetilde G)$, then  $G/c(G)$ is \sco\ and minimal.
\end{corollary}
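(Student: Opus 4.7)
The minimality of $G/c(G)$ will follow immediately from Proposition \ref{3space}(b): the hypothesis makes $c(G)=c(\widetilde G)$ a closed (hence compact) connected subgroup of $\widetilde G$ contained in $G$, and the three-space property gives minimality of $G/c(G)$. For the sequential completeness, the completion of $G/c(G)$ is $\widetilde G/c(G)$ (compactness of $c(G)$ makes the quotient compact Hausdorff, with $G/c(G)$ sitting densely). Let $\pi:\widetilde G\to\widetilde G/c(G)$ be the canonical projection, and let $(a_n)\subseteq G/c(G)$ be Cauchy with limit $\xi=\pi(y)\in\widetilde G/c(G)$. Lifting $a_n=\pi(x_n)$ with $x_n\in G$, and noting that $c(G)\subseteq G$, the assertion $\xi\in G/c(G)$ is equivalent to $y\in G$; by sequential completeness of $G$ this will follow once a sequence $(g_n)\subseteq G$ with $g_n\to y$ in $\widetilde G$ is produced.

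The plan is to extract a rapidly-Cauchy subsequence and lift it along $c(G)$. I would choose a descending sequence of neighborhoods $(V_k)$ of $0$ in $\widetilde G$ with $V_{k+1}+V_{k+1}\subseteq V_k$, pick $n_k$ with $a_{n_{k+1}}-a_{n_k}\in\pi(V_k)$, and decompose $x_{n_{k+1}}-x_{n_k}=v_k+d_k$ with $v_k\in V_k$ and $d_k\in c(G)$. Setting $c_k:=\sum_{j<k}d_j\in c(G)$, the telescoping sequence $g_k:=x_{n_k}-c_k$ then lies in $G$ (because $c(G)\subseteq G$), satisfies $g_{k+1}-g_k=v_k\in V_k$, and projects to $a_{n_k}$, so its limit in $\widetilde G$ will be a lift of $\xi$.

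The main obstacle will be ensuring $(g_k)$ is genuinely Cauchy in $\widetilde G$: this demands the family $(V_k)$ be cofinal in the neighborhood filter at $0$, which is unavailable when $\widetilde G$ is not first countable. The way around this is to reduce to a suitable separable closed subgroup of $\widetilde G$ containing all $x_n$, the lift $y$, and the $d_k$'s, in which a countable base at $0$ is available; one executes the diagonalization inside this smaller group and transfers the conclusion back using $c(G)\subseteq G$.
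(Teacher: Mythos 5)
Your minimality argument is fine and in fact coincides with the paper's own remark immediately after the corollary: $c(G)=c(\widetilde G)$ is a compact connected subgroup of the minimal group $G$, so Proposition~\ref{3space}(b) gives minimality of $G/c(G)$. The gap is in the sequential completeness half. Your telescoping correction $g_k=x_{n_k}-c_k$ does land in $G$ and does satisfy $g_{k+1}-g_k\in V_k$, but, as you note yourself, this makes $(g_k)$ Cauchy only for the filter generated by the countable family $(V_k)$, not for the uniformity of $\widetilde G$, which is a compact group of possibly enormous weight (the hypothesis makes $c(\widetilde G)$ compact, not metrizable). The proposed repair does not close the gap: a \emph{separable} closed subgroup of a compact group need not be first countable --- the closure of a countable dense subgroup of $\T^{\mathfrak c}$ is all of $\T^{\mathfrak c}$, and even the closed subgroup topologically generated by countably many elements can have weight $\mathfrak c$ (e.g.\ $\Z(p)^{\mathfrak c}$, whose dual $\Z(p)^{(\mathfrak c)}$ embeds algebraically into $\T^{\omega}$). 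So there is in general no metrizable closed subgroup of $\widetilde G$ containing the $x_n$, the lift $y$ and the correctors $d_k$, and the diagonalization cannot be carried out. Note also that your argument never uses minimality; if it worked it would prove that the quotient of \emph{any} sequentially complete precompact group by a compact subgroup it contains is sequentially complete, which is precisely the sort of statement the authors treat as delicate (cf.\ Question~\ref{Diag}(3)).

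The paper sidesteps the lifting problem by a structural reduction. Proposition~\ref{tor_free} replaces $\widetilde G$ by a compact \emph{torsion-free} cover $K$ carrying a dense sequentially complete minimal subgroup $G_1$ with $G_1/N\cong G$ for a compact totally disconnected $N$. Torsion-freeness forces the topological splitting $K=c(K)\times D$; the hypothesis $c(G)=c(\widetilde G)$ forces $c(K)\le G_1$, whence $G_1=c(K)\times G_2$ with $G_2=D\cap G_1$ closed in $G_1$. Then $G/c(G)\cong G_2$ inherits sequential completeness as a closed subgroup and minimality as a direct summand --- no lifting of Cauchy sequences along a non-metrizable fiber is ever needed. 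To salvage a direct argument you would have to either produce such a splitting of $\widetilde G$ over $c(\widetilde G)$ (which is exactly what torsion-freeness buys) or find a way to choose the lifts $x_n$ so that they already converge in $\widetilde G$.
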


\begin{proof}
By Proposition \ref{tor_free} there exists a compact torsion-free Abelian group $K$ with $w(K)=w(G)$, a dense \sco \ minimal subgroup $G_1$ of $K$ and a 
compact totally disconnected subgroup $N$ of $G_1$ such that $G_1/N\cong G$. 
Let $f:K\to K/N\cong \widetilde {G}$ be the canonical projection. Since $f$ is surjective, we have $f(c(K))=c(\widetilde {G})$.  
Since $c(G)=c(\widetilde G)$, $G_1$ contains $c(K)$. Moreover, as $K$ is torsion-free, one has $K=c(K)\times D$, where $D$ is a compact totally disconnected subgroup of $K$.  Then $G_1=c(K)\times G_2$, where $G_2=D\cap G_1$ is a closed subgroup of $G_1$. Hence
$G_2$ is \sco\ and minimal. As $G/c(G) \cong G_1/c(K) = (c(K)\times G_2)/c(K) \cong G_2$, we  deduce that $G/c(G)$ is \sco\ and minimal.
\end{proof}

Let us note that as far as only minimality of $G/c(G)$ is concerned, Corollary \ref{coro:3space}
applies, so that only compactness of $c(G)$ suffices to this end (so the equality $c(G)=c(\widetilde G)$ is not needed). 

\subsection{Proof of Theorems A, B, C, D and Corollary \ref{Crit_prod} }\label{proofs}

The major step towards the proof of Theorem A  concerns the totally disconnected case. The following  proposition and its immediate corollary go in this direction:

\begin{proposition}\label{VU}
A  totally disconnected \sco\  \mi \  Abelian group is strongly $p$-dense for every prime $p$. 
\end{proposition}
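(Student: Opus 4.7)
The plan is to reduce, via Theorem~\ref{Products}, to the case in which $G$ is quasi-$p$-torsion, so that $K := \widetilde G$ is a compact pro-$p$ abelian group, and then to combine sequential completeness with essentiality to control the ``$p$-depth'' of elements of $K$ uniformly. The reduction is clean: Theorem~\ref{Products} provides a topological decomposition $G \cong \prod_{p \in \Prm} td_p(G)$, with the matching decomposition of $\widetilde G$; each factor $td_p(G)$ is closed in $G$ (hence sequentially complete) and is a direct summand of the minimal $G$ (hence itself minimal), and strong $p$-density of $G$ is equivalent, by Fact~\ref{prop_td_p}, to strong $p$-density of $td_p(G)$ in $td_p(\widetilde G)$. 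So fix $p$ and assume $G$ is dense, essential, sequentially complete and quasi-$p$-torsion in the compact pro-$p$ abelian group $K$.

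First I would upgrade $G$ to a topological $\Zp$-submodule of $K$: given $u \in \Zp$ and $g \in G$, pick $n_k \in \Z$ with $n_k \to u$ in the $p$-adic topology; then $n_k g \in G$ is a Cauchy sequence converging in $K$ to $ug$, whence $ug \in G$ by sequential completeness. Next, for each $y \in K$ of infinite order the closed subgroup $\overline{\langle y \rangle}$ is a compact monothetic pro-$p$ abelian group: every continuous homomorphism $\Z_q \to \overline{\langle y \rangle}$ with $q \neq p$ has image that is simultaneously topologically $q$- and $p$-torsion, hence zero, so the canonical surjection $\widehat{\Z} \to \overline{\langle y \rangle}$ factors through $\Zp$, forcing $\overline{\langle y \rangle} \cong \Zp$ (in particular metrizable). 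Essentiality of $G$ in $K$ passes to the closed subgroup $\overline{\langle y \rangle}$, so $G \cap \overline{\langle y \rangle}$ is an essential, sequentially closed subgroup of the metric $\Zp$; in a metrizable space sequential and topological closure coincide, so $G \cap \overline{\langle y \rangle}$ is a closed essential subgroup of $\Zp$, hence of the form $p^{m_y} \overline{\langle y \rangle}$ for some $m_y \in \N$, and in particular $p^{m_y} y \in G$. Consequently the nested $\Zp$-submodules $A_m := \{y \in K : p^m y \in G\}$ are sequentially closed and $\bigcup_m A_m = K$.

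Strong $p$-density reduces to showing that some $A_m$ equals $K$, and this is the main obstacle. When $K$ is metrizable -- which, after invoking Proposition~\ref{tor_free} to pass to the torsion-free pro-$p$ reduction $K = \Zp^\alpha$, is precisely the case $\alpha$ countable -- each $A_m$ is topologically closed and the Baire theorem applied to the compact $K$ yields some $A_m$ with non-empty interior, hence an open finite-index subgroup containing $p^n K$, giving $p^{n+m} K \subseteq G$. For uncountable $\alpha$ the subtlety is that sequential closure and topological closure in $K$ genuinely differ, so $A_m$ may fail to be topologically closed; my approach is to apply the Baire theorem to the topological closures $\overline{A_m}$, obtaining a clopen finite-index $\overline{A_m}$ with $p^n K \subseteq \overline{A_m}$, and then to use the $\Zp$-module structure and sequential completeness of $G$ to transfer the inclusion back to $A_m$, for instance by writing each $y \in p^n K$ as a $p$-adically convergent sum of elements of $A_m$ whose partial sums form a Cauchy sequence in $G$. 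Carrying out this sequential approximation inside a non-metrizable compact pro-$p$ group, using only sequential completeness (and not, say, $\omega$-boundedness) as a hypothesis, is where I expect the main technical difficulty to lie.
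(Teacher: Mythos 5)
Your reduction to the quasi-$p$-torsion case via Theorem~\ref{Products}, the upgrade of $G$ to a $\Zp$-submodule of $K=\Zp^\alpha$, and the use of essentiality to get $p^{m_y}y\in G$ for each $y\in K$ (so that $K/G$ is a torsion $p$-module and $K=\bigcup_m A_m$ with $A_m=\{y: p^my\in G\}$ sequentially closed) all match the opening of the paper's Lemma~\ref{coroVU} and are correct. The gap is exactly where you place it, and it is the entire content of that lemma: the uniform bound in the non-metrizable case. The route you sketch for it does not work. Baire applied to the closures $\overline{A_m}$ does produce $m,n$ with $p^nK\sq\overline{A_m}$, but there is no way to ``transfer the inclusion back to $A_m$'': a point of $\overline{A_m}$ need not be the limit of any sequence from $A_m$ (sequential closure and closure genuinely differ in $\Zp^\alpha$ for uncountable $\alpha$, which is precisely why $A_m$ can fail to be closed in the first place), and density of $A_m$ in $\overline{A_m}$ for the \emph{product} topology gives no control in the far finer $p$-adic topology, so there is no reason a given $y\in p^nK$ should admit a $p$-adically convergent series of elements of $A_m$. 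Sequential completeness only lets you pass to limits of sequences that actually converge in $K$, and that is exactly what is unavailable here.

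The missing idea, which is how the paper closes this gap, is to run the Baire argument not in $K$ but in an auxiliary \emph{metrizable} complete group, where sequential closedness does imply closedness. It suffices to show that for every sequence $\{x_k\}$ in $K$ there is one $n$ with $p^nx_k\in G$ for all $k$. Let $E\sq\Zp^\omega$ be the submodule of null sequences, equipped with the $p$-adic topology (open subgroups $p^nE$); this is a complete metrizable group. The assignment $e_k\mapsto x_k$ extends to a continuous $\Zp$-homomorphism $\varphi:E\to K$, $(a_k)\mapsto\sum_k a_kx_k$. Then $F=\varphi^{-1}(G)$ is sequentially closed in $E$, hence \emph{closed} since $E$ is metrizable --- this is the step that replaces your failed transfer --- and $E/F$ is a torsion $p$-module by the pointwise statement you already proved. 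Baire applied to $E=\bigcup_n\varphi^{-1}(A_n)$ yields $p^{m+k}E\sq F$ for suitable $m,k$, and evaluating $\varphi$ at the basis vectors $e_i$ gives $p^{m+k}x_i\in G$ for all $i$. Without some such device converting the sequential hypothesis into a topological one on a Polish group, your argument cannot be completed as written.
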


Applying Theorem \ref{Stoy}, one obtains: 

\begin{corollary}\label{beta(td)}    
All powers of a  totally disconnected \sco\  \mi \  Abelian group are \mi. \end{corollary}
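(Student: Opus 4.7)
The plan is to deduce this corollary directly from the immediately preceding Proposition \ref{VU} together with Stoyanov's criterion, Theorem \ref{Stoy}. The statement is presented by the authors as an immediate consequence, and indeed no further work should be needed beyond chaining the two results.

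More concretely, let $G$ be a totally disconnected sequentially complete minimal Abelian group. By Proposition \ref{VU}, $G$ is strongly $p$-dense for every prime $p$, i.e., condition (c) of Theorem \ref{Stoy} is satisfied. Since $G$ is minimal (and Abelian, hence precompact by Prodanov--Stoyanov, Theorem \ref{Prec:Thm}), Theorem \ref{Stoy} applies, giving the equivalence of (a), (b), (c). The implication (c) $\Rightarrow$ (a) then yields that all powers $G^\kappa$ are minimal, which is exactly the desired conclusion.

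The main obstacle in this chain is not the corollary itself but Proposition \ref{VU}, whose proof must exhibit, for each prime $p$, an integer $k$ with $p^k\, td_p(\widetilde G)\subseteq G$; the corollary merely packages this strong $p$-density into the language of productivity of minimality via Theorem \ref{Stoy}. In particular, once Proposition \ref{VU} is granted, there is no further interaction with sequential completeness or total disconnectedness needed: Stoyanov's criterion is a purely minimal-group statement. Thus, aside from invoking the two cited results in the correct order, there is essentially nothing to prove.
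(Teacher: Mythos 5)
Your proof is correct and is exactly the paper's argument: the authors also obtain the corollary by combining Proposition \ref{VU} (strong $p$-density for every prime) with the implication (c) $\Rightarrow$ (a) of Theorem \ref{Stoy}. Nothing further is needed.
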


We split the proof of Proposition \ref{VU} in several steps. 

\smallskip 
 
 The next lemma is a local version of Proposition \ref{VU} and provides an essential tool for the proof of Theorem C(a) as well. In the case of  subgroups of $\Zp^\alpha$
  (which is the key to the proof in the general case of the lemma), it was  proved in \cite[Main Lemma]{DTo} for $\omega$-bounded groups $G$ by means of a
specific functorial correspondence, based on Pontryagin duality, between precompact groups covered by their compact subgroups and
linearly topologized groups introduced in \cite{T1}. This duality
technique essentially  used  the assumption of $\omega$-boundedness and cannot work even
under  the assumption of countable compactness. The proof given here in the \sco\ case is completely different, it has purely topological nature.  

\begin{lemma}\label{coroVU}
 Let $p$ be a prime number.  If $G$ is a dense  \sco\ minimal subgroup $G$ of a pro-$p$-group $A$, then 
there exists $k\in \N$ such that $p^kA\sq G$.\end{lemma}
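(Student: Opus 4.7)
The strategy is a Baire category argument on the compact group $A$, with sequential completeness used to convert closure-type statements into actual membership. For each $k\in\N$, set
\[
G_k := \{a\in A : p^k a\in G\},
\]
a subgroup of $A$ containing $G$; the conclusion of the lemma amounts to the statement that some $G_k$ equals $A$.

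I would first verify that each $G_k$ is sequentially closed in $A$: if $x_n\in G_k$ and $x_n\to x$ in $A$, then $p^k x_n\to p^k x$ with $p^k x_n\in G$, so sequential completeness of $G$ forces $p^k x\in G$, whence $x\in G_k$. Next I would show $\bigcup_{k\in\N} G_k = A$: for any $a\in A$ the closed monothetic subgroup $C_a := \overline{\langle a\rangle}$ of the pro-$p$-group $A$ is either a finite cyclic $p$-group or topologically isomorphic to $\Zp$, in either case metrizable, so the sequentially closed subgroup $G\cap C_a$ is actually closed in $C_a$. By Theorem \ref{Tot_Min_Crit} the minimality of $G$ makes it essential in $A$, so $G\cap C_a\neq 0$ whenever $C_a\neq 0$; since the nontrivial closed subgroups of $C_a$ are exactly the $p^n C_a$, one gets $G\cap C_a = p^{k(a)} C_a$ for some $k(a)\in\N$, and hence $a\in G_{k(a)}$.

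Since $A$ is a compact Hausdorff (hence Baire) space and $A = \bigcup_k G_k \subseteq \bigcup_k \overline{G_k}$, some $H := \overline{G_{k_0}}$ has nonempty interior; as a closed subgroup of the topological group $A$ with nonempty interior, $H$ is clopen and of finite index in $A$. Writing $p^m$ for the exponent of the finite $p$-group $A/H$ then yields $p^m A\subseteq H$.

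The main obstacle is the final step: passing from the closure $H = \overline{G_{k_0}}$ back to $G_{k_0}$ itself, so as to conclude $p^{k_0} H\subseteq G$ and hence $p^{k_0 + m} A\subseteq G$. In a non-metrizable compact group a dense sequentially closed subgroup need not be closed, so $G_{k_0} = H$ is not automatic. To bridge this gap I would exploit two additional features. First, sequential completeness automatically makes $G$ into a $\Zp$-submodule of $A$: for $\alpha\in\Zp$ and $g\in G$, any sequence of integers $n_i$ converging to $\alpha$ $p$-adically gives $n_i g\to \alpha g$ in $A$ (by continuity of the $\Zp$-action on the compact pro-$p$-group $A$) with all $n_i g\in G$, so sequential closedness of $G$ forces $\alpha g\in G$. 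Second, every monothetic closed subgroup of $A$ is metrizable, which permits replacing net-approximations by sequential ones inside them. The aim is to produce, for each $a\in H$, an explicit sequence in $p^{k_0} G_{k_0}\subseteq G$ converging to $p^{k_0} a$ inside a suitable metrizable closed subgroup of $A$, so that sequential completeness of $G$ delivers $p^{k_0} a\in G$.
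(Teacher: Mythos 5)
There is a genuine gap, and it sits exactly where you place your ``main obstacle''; worse, the Baire step that precedes it is vacuous. Since $G\sq G_k$ for every $k$ and $G$ is dense in $A$, you have $\overline{G_k}=A$ for every $k$, so the conclusion ``some $\overline{G_{k_0}}$ has nonempty interior'' holds trivially with $k_0=0$, $H=A$ and $m=0$, and yields only the tautology $A\sq\overline{G}$. All the content of the lemma is therefore concentrated in the unproved passage from $\overline{G_{k_0}}$ back to $G_{k_0}$, and the bridging ideas you sketch cannot close it: the $\Zp$-module structure of $G$ and the essentiality argument on the metrizable monothetic subgroups $\overline{\langle a\rangle}$ only give a \emph{pointwise} exponent $k(a)$ with $p^{k(a)}a\in G$ (which you already used to show $\bigcup_k G_k=A$), whereas the lemma is precisely the assertion that this exponent can be chosen \emph{uniformly} in $a$. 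Producing a sequence in $G$ converging to $p^{k_0}a$ inside a metrizable subgroup does not help either, because sequential completeness means the set of such limits is just $G$ itself; no new elements can be reached this way.

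The paper's proof gets the uniformity by running the Baire category argument in a different, genuinely metrizable space. After reducing to $A=\Zp^\alpha$ via Proposition \ref{tor_free}, it takes an arbitrary sequence $\{x_k\}$ in $A$, forms the complete metrizable group $E$ of null sequences in $\Zp^\omega$ with the $p$-adic topology, and maps it to $A$ by $\varphi(\{a_k\})=\sum a_kx_k$. The preimage $F=\varphi^{-1}(G)$ is sequentially closed in the metrizable $E$, hence actually closed, so the decomposition $E=\bigcup_n q^{-1}((E/F)[p^n])$ into closed subgroups admits a genuine Baire argument, producing a single $n$ with $p^n x_k\in G$ for all $k$ simultaneously. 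That device -- encoding an arbitrary sequence of elements of $A$ as points of one fixed Polish group where sequential closedness upgrades to closedness -- is the missing idea; without it, or something equivalent, your outline does not yield the uniform bound.
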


\begin{proof} We first show that the proof can be reduced to the case when $A ={\Z}^{\alpha }_p$. By Proposition \ref{tor_free}, 
$A$ is a quotient of $B={\Z}^{\alpha }_p$, with $\alpha= w(A)$ and $G_1 = f^{-1}(G)$ is a sequentially complete minimal subgroup of $B$.
If there exists $k\in \N^*$ such that $p^k B \sq G_1$, then clearly, $p^kA\sq G$. 

Obviously, $A={\Z}^{\alpha }_p$ is a $\Z_p$-module. Moreover, since for $a\in G$ 
the cyclic $\Z_p$-submodule $\Z_pa \cong \Z_p$ is compact metrizable, so
it is contained in the \sco \ subgroup $G$. Hence, $G$ is a submodule of $A$. 
By the minimality of $G$ and Theorem \ref{Tot_Min_Crit}, for $0\ne x\in A$ the cyclic $\Z_p$-submodule $\Z_px = \overline{\langle x\rangle }$ non-trivially meets $G$, hence
$A/G$ is torsion $\Z_p$-module, so a $p$-group. We need to prove that it is a {\em bounded} $p$-group. 
Clearly, it is enough to prove that for every sequence $\{x_k: k\in 
\N^*\}\sq A$ there exists $n\in \N^*$ (independent on $k$) such that $p^nx_k\in  G$ for all $k$.

Consider the group $\Z_p^\omega$ equipped with the $p$-adic topology  $\tau_p$. A base of the neighbourhoods of 0
in the topology $\tau_p$ is given by is given by the  open  subgroups  $V_n=p^n\Z_p^\omega$, $n\in \N^*$. Then $\Z_p^\omega$ is a complete metrizable topological group.
Let $E\subseteq \Z_p^\omega$ be the set of all sequences $\{a_k: k\in \N^*\}$ with $\lim a_k=0$.\\

\begin{claim}
$E$ is a closed pure submodule of $\Z_p^\omega$, i.e., $E \cap p^m  \Z_p^\omega = p^mE$ for all $m\in \N^*$.
\end{claim}

\begin{proof} Obviously, $E$ is a submodule of $\Z_p^\omega$. To check that $E$ is closed, pick $c = (c_k) \in \Z_p^\omega \setminus E$. Then $c_k \not \to 0$, so there exists $m$ such that for every $k\in \N^*$ there exists $l> k$ such that $c_l \not \in p^m\Z_p$. Then $(c+ V_m )\cap E = \emptyset$. 
Indeed, if $a = (a_k) \in E$, then there exists $t>k$ such that $a_s \in p^m\Z_p$ for all $s\geq t$. Taking now $s\geq t$ 
such that  $c_s \not \in V_m$ we have $a_s - c_s \not \in p^m\Z_p$, as $a_s \in p^m\Z_p$. 
Hence $a-c \not \in V_m$ and $a \not \in c+V_m$. This proves that $(c+ V_m )\cap E = \emptyset$. 

It remains to check that $E \cap p^m  \Z_p^\omega = p^mE$ for all $m\in \N^*$.
To check the non-trivial inclusion $E \cap p^m  \Z_p^\omega \leq p^mE$ for $m\in \N^*$ pick 
$a=(a_k) \in E \cap p^m  \Z_p^\omega$. Then $a_k= p^mb_k$ for some $b_k \in \Z_p$ and for all $k$. 
Since obviously $b_k \to 0$ in $\Z_p$, $b = (b_k) \in E$, so $a \in p^mb\in p^mE$. \end{proof}

Now we define a map $\varphi :E\to A$ putting $\varphi (e_n)= x_n$, where $\{e_n: n \in \N^*\}$ is the canonical base of the dense submodule $E'=\Z_p^{(\omega)}$
of $E$. This assignment can be uniquely extended to a $\Z_p$-homomorphism $\varphi :E'\to A$ which is obviously continuous, so can be further (uniquely) extended to a  continuous $\Z_p$-homomorphism $\varphi :E\to A$ satisfying  $\{a_k\}\buildrel {\varphi} \over \mapsto \sum a_kx_k$. 

\smallskip

The $\Z_p$-submodule $F=\varphi^{-1}(G)$ of $E$ is sequentially closed (and hence closed) and $E/F$, being isomorphic to a submodule of $A/G$, is a $p$-group. 
Therefore,  for every $s\in E$ there exists $n\in \N^*$ such that $p^n s \in F$, so $E/F =\bigcup_{n\in \N^*}(E/F)[p^n]$ and 
$(E/F)[p^n]$ is closed in $E/F$ equipped with the quotient topology. Let $q: E \to E/F$ be the canonical homomorphism.
Then $E_n= q^{-1}((E/F)[p^n])$ is closed in $E$ and $E=\bigcup_{n\in \N^*}E_n$.  By the Baire category theorem implies $E_k$ has a non-empty interior for some $k\in \N^*$. Since  $E_k$ is a subgroup, this yields that $F_k$ is open. Hence there exists an open subgroup  $E\cap V_m = p^m E $ in $E$ such that $p^k(p^m E ) = p^{m+k} E\sq F$ for some $k\in \N^*$. This means, that for every sequence $\{a_i\}\sq p^{m+k}  E$ one has $\varphi(\{a_i\})\in G$. In particular, for every $i\in \N^*$ we have $p^{m+k} x_i = \varphi(p^{m+k} e_i) \in G$. 
\end{proof}

 \bigskip
 
 Now we prove first Theorem C that will be used below in the proof of Proposition \ref{VU} and Theorem B. 

 \bigskip

\noindent {\bf Proof of Theorem C.} Both (a) and (b) are proved in \cite[Theorem 3.6]{DT1} in full detail although the proof is quite long and heavy.
Furthermore, (a) and (b) are not articulated, with the blanket assumption that $w(c(\widetilde{G}))$ is not Ulam measurable.  We preferred to split (a) and (b) so that it becomes clear where the relevant assumption that $w(c(\widetilde{G}))$ is not Ulam measurable is used (only in item(b)), as well as the crucial role of Lemma \ref{coroVU} in the proof of (a). 

(a) The proof of the fact that $c(G)=c(\widetilde{G})$ whenever $c(G)$ is compact can be found in 
Steps 1 and 3 of the proof of \cite[Theorem 3.6]{DT1}. Both steps essentially use Lemma \ref{coroVU}, in the particular (but essential) case
when $A$ is a power of $\Z_p$. 

We prefer to point out a self-contained argument for deducing the remaining part of (a) from the equality $c(G)=c(\widetilde{G})$, rather than refer to \cite{DT1}. Indeed, Corollary \ref{coro:tor_free} implies that this equality yields minimality and sequential completeness of  $G/c(G)$. 

(b) The proof of this item is Step 2 in the proof of \cite[Theorem 3.6]{DT1}. $\Box$.

\bigskip

\noindent {\bf Proof of Proposition \ref{VU}.} We have to prove that a
 totally disconnected \sco\  \mi \  Abelian group $G$ is strongly $p$-dense for every prime $p$. 

By Theorems \ref{Prec:Thm} and C, the completion $K$ of $G$ is compact and totally disconnected, hence
$K = \prod_p K_p$, where  $K_p=td_p(K)$ is a pro-$p$-group, according to Fact \ref{prop_td_p}(3).  
By Theorem \ref{Products},  $G = \prod_p G_p$, where $G_p=td_p(G) = G \cap K_p$ is a dense essential subgroup of $K_p$. 
Moreover, $G_p$ is \sco, being a closed subgroup of $G$. By Lemma \ref{coroVU}, there exists $k_p\in \N^*$ with $p^{k_p}K_p \sq G_p \sq G$. 
$\Box$

\bigskip

\noindent{\bf Proof of Theorem A.}  We have to prove that for a minimal \sco \  Abelian group $G$ the following are equivalent:
\begin{itemize}   
     \item[(a)] all powers of $G$ are minimal;
     \item[(b)] $G^\omega$ is minimal.  
     \item[(c)] $G$ contains $c(\widetilde{G})$;  hence, $c(G)=c(\widetilde{G})$ is compact. 
\end{itemize}

\medskip 
 
To prove the implication (b) $\to$ (a) assume that $G^\omega$ is minimal. Since for every prime $p$ every closed monothetic subgroup of $td_p(G)$ is compact, it follows from \cite{D5} (see also \cite[Corollary 6.2.8]{DPS}) that all powers of $G$ are minimal, i.e., (a) holds.

To prove the the implication (a) $\to$ (c) assume that all powers $G^\alpha$ are minimal. Then by Theorem \ref{Stoy} $G$ is strongly $p$-dense for some prime $p$. By Lemma \ref{Str_d}, $G$  contains  $K = c(\widetilde{G})$. Since $K$ is divisible, this yields $td_p(K)$ is divisible too. Hence $G$ contains $td_p(K)$.  According to \cite[Theorem 2.9(a)]{DT1} $td_p(K)$ is sequentially dense in $K$. Therefore, the sequential completeness of $G$ implies $K\subseteq G$. In particular, $c(G)=K$ is compact.  

For the proof of the implication (c) $\to$ (b) assume that $c(G)=c(\widetilde G)$ holds true. By
Corollary \ref{coro:tor_free}, $G/c(G)$ is minimal and \sco. Since $G/c(G)$ is totally disconnected, Corollary \ref{beta(td)} yields that $(G/c(G))^\omega$ is \mi. Since $(G/c(G))^\omega\cong G^\omega/(c(G))^\omega$ and $c(G)^\omega$ is compact, we deduce that $G^\omega$ is minimal, by Proposition \ref{3space}.
 $\Box$
 
 \bigskip
 
\noindent {\bf Proof of Corollary \ref{Crit_prod}.} We have to prove that if $\{G_i\}_{i\in I}$ is a family of \sco \ Abelian groups,  then $G=\prod_{i\in I}G_i$  is minimal if and only if  every countable subproduct is minimal and in such a case there exists a co-finite subset $J\subseteq I$ such that all groups $G_i^\omega$, $i\in J$,
are \mi. 

The first part follows from \cite[Theorem 6.2.7]{DPS}. According to the same theorem, there exists a co-finite subset $J\subseteq I$ such that 
all groups $G_i$, $i\in J$, are strongly $p$-dense. According to Theorem \ref{Stoy} this yields minimality of $G_i^\omega$. $\Box$

 \bigskip
 
\noindent {\bf Proof of Theorem B.} The implication (c) $\to$ (b) is trivial. The implication (b) $\to$  (a) follows from Theorem C. 
 
 (a) $\to $ (c) Let  $\alpha$ be  Ulam measurable and let  $I$ be a set of cardinality $\alpha$.  Since the cardinal $\alpha$ is Ulam measurable, there exists a
countably complete ultrafilter ${\mathcal  F}$ on $I$. Following \cite[Proposition 3.6.1]{DPS}, in the sequel we use 
the following properties of $\K$ and related notation. Taking the dual of the short exact sequence $0 \to \Z \to \Q \to \Q/\Z \to 0$
one obtains a  short exact sequence $0 \to \widehat{\Q/\Z}   \to \K = \widehat{\Q} \to  \widehat{\Z} = \T \to 0$. Here we denote by
$\HH$ the closed subgroup $\widehat{\Q/\Z} \cong \prod_p\Z_p$ so $\K$, so that $\HH_p:= td_p(\HH) \cong \Z_p$, up to isomorphism. 

\smallskip 

The subgroup $B_{\mathcal  F}=\bigcup_{S\in{\mathcal  F}}\K^{I\setminus S}$ of $\K^I$ is dense, since $\K^{(I)}\leq B_{\mathcal  F}$ and the former subgroup is dense.
Since the filter ${\mathcal  F}$ is countably complete, the family of subgroup $\{\K^{I\setminus S}\}_{S\in{\mathcal  F}}$ is $\omega$-directed, 
hence the subgroup $B_{\mathcal  F}$ is $\omega$-bounded. Then the group $G_{\mathcal  F}:= \HH^I + B_{\mathcal  F}$  is {\bf  $\omega$-bounded} as well, as $\HH^I$ is compact.
So far we proved that $G_{\mathcal  F}$ is a dense $\omega$-bounded subgroup of $\K^I$. To see that it is {\bf not compact}, it suffices 
to note that it is a {proper} subgroup of $\K^{I}$ (indeed, if $a\in \K\setminus \HH$, then the  point ${\bf a}$ of $\K^{I}$ 
 with all coordinates equal to $a$ does not belong to  $G_{\mathcal  F}$). 

\smallskip 

It only remains to check that $G_{\mathcal  F}$ is {\bf minimal}. According to Theorem \ref{Tot_Min_Crit}, 
we have to prove that $G_{\mathcal  F}$ is {essential} in $\widetilde{G}_{\mathcal  F}=\K^{I}$. 
By Lemma \ref{loc:criterion}, it suffices to check that  for every prime $p$ and for every $x=(x_i)\in 
td_p(\K^{I}) =( td_p(\K))^{I}$ there exists $n\in \N^*$ such that $p^nx\in G_{\mathcal  F}$. Let $supp(x)=\{i\in I:x_i\ne 0 \}$. 
For $n\in \N^*$ set $T_n=\{i\in supp (x): p^n x_i \not \in  \HH_p \}$.  Obviously $T_1\supseteq  T_2\supseteq \ldots $
is a chain in  $X$ and $\bigcap T_n=\emptyset$ as $i\in supp (x)$
yields  there exists $k$, so that $i\not \in  T_{k}$, as $x_i\in  td_p(\K) = \bigcup_{k=0}^\infty p^{-k}\HH_p$. Hence 
\begin{equation}\label{LastEq}
I= \bigcup_{n=1}^\infty I\setminus T_n.
\end{equation}
 Since ${\mathcal  F}$ is a countably complete ultrafilter, it follows from (\ref{LastEq}) that   $I\setminus T_{n_0}\in {\mathcal  F}$ for some $n_0\in \N^*$.
Hence $\K^{T_{n_0}}\subseteq B_{\mathcal  F} \subseteq G_{\mathcal  F}$. Split $x=x'+x''$, such that $supp(x')\cap supp (x'')=\emptyset$ and $supp(x')\subseteq T_{n_0}$. Then 
$supp(x'')\subseteq I\setminus T_{n_0}$ so that $p^{n_0}x''\in {\HH^I}\subseteq G_{\mathcal  F}$, while $x'\in 
\K^{T_{n_0}}\subseteq G_{\mathcal  F}$. Consequently,  $p^{n_0}x\in G_{\mathcal  F}$.

\smallskip 

To prove the last assertion of the theorem assume now that $\alpha$ is measurable. Then we can choose the filter ${\mathcal  F}$ complete, i.e., stable under intersections of families of cardinality $< \alpha$. As above, the family of subgroup $\{\K^{I\setminus S}\}_{S\in{\mathcal  F}}$ is $\beta$-directed
for every $\beta < \alpha$, hence the subgroup $B_{\mathcal  F}=\bigcup_{S\in{\mathcal  F}}\K^{I\setminus S}$ of $\K^I$, defined as above, is $\beta$-bounded for every $\beta < \alpha$. Then also the group $G_{\mathcal  F} = \HH^I + B_{\mathcal  F}$  is {\bf   $\beta$-bounded for every $\beta < \alpha$}. The rest of the proof works in the same way. 
 $\Box$
 
 \bigskip

The following theorem provides a proof of  a stronger version of Theorem D: 

 \begin{theorem}\label{TD_cor} 
  Let $G$ be a \sco\  Abelian such that $G^\omega$ is \mi.  Then then for every $p\in\Prm$ there exists a $k_p\in \N^*$ such that
$G$ contains the compact subgroup $N= \overline{\langle p^{k_p}td_p(\widetilde{G}):p\in \Prm\rangle} $
(the closure  taken in $\widetilde{G}$)  and $G/N\cong \prod_{p \in \Prm} B_p$, where $B_p$ is a bounded $p$-torsion minimal group for
every prime $p$. 
\end{theorem}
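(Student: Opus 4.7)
My plan is to reduce the problem to the totally disconnected quotient $G/c(G)$, decompose it prime-by-prime using Theorem~\ref{Products}, and then lift a carefully chosen compact subgroup back to $\widetilde G$ through the canonical projection $q:\widetilde G\to\widetilde G/c(\widetilde G)$. The main obstacle I anticipate is showing that the closed subgroup $N$ described intrinsically in $\widetilde G$ coincides with the preimage $q^{-1}\bigl(\prod_p p^{k_p}K_p\bigr)$, since this preimage is what visibly sits inside $G$.

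Since $G^\omega$ is minimal, Theorem~A gives $c(G)=c(\widetilde G)\subseteq G$ compact, so by Corollary~\ref{coro:tor_free} the quotient $G/c(G)$ is sequentially complete and minimal, and it is totally disconnected. Theorem~\ref{Products} then yields $G/c(G)=\prod_p G_p$ with $G_p:=td_p(G/c(G))$ a dense sequentially complete minimal subgroup of the pro-$p$-group $K_p:=td_p(\widetilde G/c(\widetilde G))$, and $\widetilde G/c(\widetilde G)=\prod_p K_p$. Lemma~\ref{coroVU} supplies, for every prime $p$, an integer $n_p\in\N^*$ with $p^{n_p}K_p\subseteq G_p$; I set $k_p:=n_p+1$, so that both $p^{k_p}K_p\subseteq G_p$ and the stronger $p^{k_p-1}K_p\subseteq G_p$ hold (the latter is needed for minimality of $B_p$ at the end).

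Next, I introduce $N_0:=q^{-1}\bigl(\prod_p p^{k_p}K_p\bigr)$, which is closed in $\widetilde G$ hence compact, and is contained in $G$ because $c(\widetilde G)\subseteq G$ and $\prod_p p^{k_p}K_p\subseteq\prod_p G_p=G/c(G)$. To identify $N_0$ with $N:=\overline{\langle p^{k_p}td_p(\widetilde G):p\in\Prm\rangle}$, Fact~\ref{prop_td_p}(1a) shows $p^{k_p}td_p(\widetilde G)\subseteq q^{-1}(p^{k_p}K_p)\subseteq N_0$ for each $p$, giving $N\subseteq N_0$, while divisibility of $c(\widetilde G)$ (hence of $td_p(c(\widetilde G))$), combined with the density of $td_p(c(\widetilde G))$ in $c(\widetilde G)$ provided by \cite[Theorem~2.9(a)]{DT1}, yields $c(\widetilde G)\subseteq N$. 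For the reverse inclusion, given $x\in N_0$ with $q(x)=(y_p)_p$, I lift each $y_p=p^{k_p}k'_p$ via Fact~\ref{prop_td_p}(1a) to an element $z_p:=p^{k_p}z'_p\in p^{k_p}td_p(\widetilde G)\subseteq N$; by compactness of $\widetilde G$ the net of finite partial sums $s_F:=\sum_{p\in F}z_p\in N$ admits a subnet converging to some $s\in N$, and since the $p$-th coordinate of $q(s_F)$ equals $y_p$ as soon as $p\in F$, one has $q(s)=(y_p)_p=q(x)$. Hence $x-s\in\ker q=c(\widetilde G)\subseteq N$, so $x\in N$. Thus $N=N_0\subseteq G$.

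The quotient now computes as $G/N\cong (G/c(G))/\prod_p p^{k_p}K_p=\prod_p B_p$, with $B_p:=G_p/p^{k_p}K_p$ a $p^{k_p}$-bounded $p$-torsion group densely embedded in the compact $K_p/p^{k_p}K_p$. Minimality of $B_p$ will follow from Theorem~\ref{Tot_Min_Crit} once I verify essentiality in $K_p/p^{k_p}K_p$; since the latter is $p^{k_p}$-torsion, Lemma~\ref{loc:criterion} reduces the check to $Soc(K_p/p^{k_p}K_p)\subseteq B_p$. A short computation solving $py\in p^{k_p}K_p$ in $K_p$ gives $Soc(K_p/p^{k_p}K_p)=(K_p[p]+p^{k_p-1}K_p)/p^{k_p}K_p$; minimality of $G_p$ forces $K_p[p]\subseteq G_p$ by Lemma~\ref{loc:criterion}, while the choice $k_p=n_p+1$ ensures $p^{k_p-1}K_p\subseteq G_p$, so the socle lands inside $B_p$ as needed.
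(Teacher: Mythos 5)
Your proposal is correct and follows essentially the same route as the paper's proof: reduce to the totally disconnected quotient $G/c(G)$ via Theorem A and Corollary \ref{coro:tor_free}, obtain the exponents $k_p$ from Lemma \ref{coroVU} (which is exactly what Proposition \ref{VU} packages), identify $N$ with the preimage $q^{-1}\bigl(\prod_p p^{k_p}K_p\bigr)$, and verify minimality of each $B_p$ by the same socle computation $Soc(K_p/p^{k_p}K_p)=(K_p[p]+p^{k_p-1}K_p)/p^{k_p}K_p$. The only differences (using Theorem \ref{Products} to get $\prod_p p^{k_p}K_p\le G/c(G)$ directly, and a net-of-partial-sums argument in place of the paper's closure/compactness argument for $N=q^{-1}(N_1)$) are cosmetic.
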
 

\begin{proof} By Theorem A, $c(G)=c(\widetilde{G})$ is compact.  By Corollary \ref{coro:tor_free},  $G_1:= G/c(G)$ is minimal and \sco. 
Put $A = \widetilde{G}/c(G)$ and let $q: \widetilde{G} \to \widetilde{G}/c(G)$ be the canonical homomorphism. 
Clearly $A$ is the completion of $G_1$.  Put for brevity $A_p = td_p(A)$. 
By Fact \ref{prop_td_p}(3), 
 $ A = \prod_p A_p$, since $A$ is totally disconnected. Proposition \ref{VU} implies that for every prime $p$ there exists $k_p\in \N^*$ such that 
\begin{equation}\label{Eq1}
N_p:=p^{k_p}A_p\leq  G_p
\end{equation}
 Hence, $S_1=\bigoplus_{p\in \Prm}pN_p\leq G_1$. Since $S_1$ is sequentially dense in the closed subgroup $N_1= \prod_{p \in \Prm} pN_p$ of $A$
and $G_1$ is \sco, we deduce that $N_1 \leq G_1$. Hence, $N = q^{-1}(N_1)\leq G$. 
By Fact \ref{prop_td_p}, $q(td_p(\widetilde{G})) = A_p$ for every prime $p$, hence 
the subgroup $S=\langle p^{k_p+1}td_p(\widetilde{G}):p\in \Prm\rangle$ of $\widetilde{G}$ satisfies $q(S) = S_1 \leq  N_1$, so $S\leq N\leq G$. Hence, 
$S \leq L:=  \overline{S} \leq N$ and $S_1= q(S) \leq q(L)\leq q(N) = N_1 = \overline{S_1}$. Since
$q(L)$ is compact, we deduce that $q(L) = q(N)$. As $\ker q  = c(\widetilde{G}) \leq L \leq N$ by (the proof of) Lemma \ref{Str_d}, 
we conclude that $L = N$. This proves the first assertion of the theorem. 

To prove the second one we put $B_p = A_p/pN_p = A_p/p^{k_p+1}A_p$ and note that $A/N_1 \cong  \prod_{p \in \Prm}B_p$. 
Since $G/N \cong G_1/N_1$, it is enough to prove that $G_1/N_1$ is \mi. According to Theorem \ref{Tot_Min_Crit}, 
this amounts to check that $G_1/N_1$ is a an essential subgroup of $A/N_1$. Since  for every prime $p$ the group $B_p$ is a bounded $p$-group, it suffices to check that 
$Soc(B_p) \leq G_1/N_1$, according to Lemma \ref{loc:criterion}. 

The minimality of $G_1$ and Lemma \ref{loc:criterion} yield that $Soc(A) = \bigoplus_{p \in \Prm} A[p]\leq G_1$. By virtue of (\ref{Eq1}), we obtain 
\begin{equation}\label{Eq2} 
  A[p] + p^{k_p}A_p \leq G_1. 
 \end{equation}
To check that $Soc(B_p) \leq G_1/N_1$ pick 
 $\overline{a} = a + pN_p \in Soc(B_p)$, then $p\overline{a} =0$ in $B_p$, so $pa \in pN_p$. Consequently, $pa = p^{k_p+1}b$ for some $b \in A_p$. Hence, $t:=a- p^{k_p}b \in A[p]$. Now (\ref{Eq2}) gives $a= t + p^{k_p}b\in G$. Therefore, $\overline{a} \in G_1/N_1$. \end{proof} 

\section{Final comments and questions}

The groups considered in this section are not necessarily Abelian. 

Sequential completeness is not preserved under taking  continuous homomorphic images.  Following \cite{DT3}, 
call a group $G$ {\it  $h$-\sco} if all  continuous homomorphic images of $G$ are \sco. Countably compact groups are obviously $h$-sequentially complete.
 Nilpotent sequentially $h$-complete groups are precompact \cite[Theorem 3.6]{DT3}, while 
 any precompact sequentially $h$-complete group is pseudocompact \cite[Theorem 3.9]{DT3}. 

\begin{question}\label{Diag}
\begin{itemize}
    \item[(1)] Find an example of a precompact $h$-\sco \  group that is not \cc.
    \item[(2)] Find an example of a pseudocompact \sco \  group that is not $h$-\sco.
    \item[(3)] Find an example of a (precompact) \sco \  group that has
non-sequentially-complete quotients.
   \item[(4)] Are precompact \sco \  \mi \ groups pseudocompact? 
   \item[(5)] Are  \sco \  \mi \ totally (hereditarily) disconnected groups zero-dimensional? 
\end{itemize}          \end{question}

In some cases (4) has a positive answer -- when the group has large hereditarily pseudocompact (in particular, \cc) subgroups.
In view of \cite[Theorem 1.2]{D8} (5) is true for $h$-\sco \ groups.  On the other hand, it was shown in \cite{DT1} that (5) is true for  \sco \  \mi \ Abelian groups. 

\begin{question}\label{Ques_conv_seq} Which of the following properties of an infinite topological group guarantee the existence of non-trivial converging sequences:
i) \cc \  and minimal; ii) \cc \ and \tm; iii) \tm?  \end{question}

The following question was left open in \cite{DT1}: 

\begin{question}
Let $G$ be a \sco \ \mi \ Abelian group. Is then $G/c(G)$ \sco? 
\end{question}

Finally, we recall an old open problem raised by the second named author closely related to the topic of this paper. We now cancel our assumption that all groups under consideration are Abelian.  

\begin{question}\label{QuesUsp1} {\rm (Uspenskij \cite{Usp})} Is it true that arbitrary products of complete minimal groups are minimal? 
What about infinite powers?
\end{question}

Note that the answer is positive in the Abelian case, since minimal complete Abelian groups are compact, by Theorem \ref{Prec:Thm}. ``Complete" in Question~\ref{QuesUsp1} means complete with  respect to the two-sided uniformity. The question also makes sense for the case of Weil-complete
groups, that is, groups complete with respect to the left (or right) uniformity. A particular instance of such groups are the locally compact ones. The above question
seems to be open even in the case of locally compact minimal groups (many instances of positive answer in this case can be found in \cite{RS}, yet no 
general result seems to be available to the best of our knowledge).  

\bibliographystyle{amsplain}

\end{document}